\documentclass[11pt]{amsart}
\usepackage{amscd}
\usepackage{amsfonts}
\usepackage{amssymb,latexsym}
\usepackage{enumerate}
\usepackage{amsmath}
\usepackage{mathrsfs}
\usepackage{amsthm}
\usepackage{hyperref}
\usepackage[square, comma, sort&compress, numbers]{natbib}
\usepackage{bm}
\usepackage{esint}
\usepackage{fancyhdr}
\usepackage{lastpage}
\usepackage{layout}
\usepackage{ulem}
\usepackage{extarrows}
\usepackage{geometry}
\pagestyle{headings}
%\fancyhead{}
%\fancyhead[CO]{\rightmark}
%\fancyhead[CE]{\leftmark}
%\fancyfoot[C]{\thepage}
%\renewcommand{\headrulewidth}{0pt}
%\renewcommand{\footrulewidth}{0pt}
%\textheight 220mm \textwidth 160mm \baselineskip 15pt \footskip 20pt
%\topmargin=0cm %\hoffset=-1.5cm \voffset=0cm
%\oddsidemargin0.5cm \evensidemargin 0.5cm
\geometry{left=3.5cm,right=3.5cm,top=3.5cm,bottom=3.5cm}
\newcommand{\ol}{\overline}

\newcommand{\pa}{\partial}
\newcommand{\vp}{\varphi}
\DeclareMathOperator\tr{tr}
\DeclareMathOperator\id{id}

\DeclareMathOperator\vol{vol}
\DeclareMathOperator\dvol{dvol}

\DeclareMathOperator\loc{loc}

\begin{document}
\newcounter{remark}
\newcounter{theor}
\setcounter{remark}{0}
\setcounter{theor}{1}
\newtheorem{claim}{Claim}[section]
\newtheorem{theorem}{Theorem}[section]
\newtheorem{lemma}{Lemma}[section]
\newtheorem{corollary}{Corollary}[section]
\newtheorem{corollarys}{Corollary}
\newtheorem{proposition}{Proposition}[section]
\newtheorem{question}{question}
\newtheorem{defn}{Definition}[section]
\newtheorem{examp}{Example}[section]
\newtheorem{assumption}{Assumption}[section]
\newtheorem{case}{Case}
\newtheorem*{mainassumption}{Main Assumption}
\newtheorem{rem}{Remark}[section]
\newtheorem*{theorem1}{Theorem}
\numberwithin{equation}{section}
%\begin{center}
%\textbf{Kobayashi-Hitchin correspondence for analytically semi-stable bundles}
%\end{center}
\title{Prescribing Chern scalar curvatures on noncompact manifolds}
\author{Di Wu}
\address{Di Wu, School of Mathematics and Statistics, Nanjing University of Science and Technology, Nanjing 210094, People's Republic of China}
\email{wudi123@mail.ustc.edu.cn}
\author{Xi Zhang}
\address{Xi Zhang, School of Mathematics and Statistics, Nanjing University of Science and Technology, Nanjing 210094, People's Republic of China}
\email{mathzx@ustc.edu.cn}
\subjclass[]{35J60, 53A30, 53C55}
\keywords{Noncompact manifold, Non-K\"{a}hler metric, Prescribed Chern scalar curvature, Kazdan-Warner type equation, Multiplicity of solutions.}
\thanks{The research was supported by the National Key R\&D Program of China 2020YFA0713100. Both authors are partially supported by NSF in China No.12141104. The first author is also supported by the Jiangsu Funding Program for Excellent Postdoctoral Talent 2022ZB282.}
\maketitle
\begin{abstract}
In this paper, we investigate the noncompact prescribed Chern scalar curvature problem which reduces to solve a Kazdan-Warner type equation on noncompact non-K\"{a}hler manifolds. By introducing an analytic condition on noncompact manifolds, we establish related existence results. As its another application, we further give a new proof of a classical multiplicity theorem of W.M. Ni \cite{Ni1982}.
\end{abstract}
%\tableofcontents
\section{Introduction}
%A basic question in complex geometry is to find canonical metrics on complex manifolds. For a compact K\"{a}hler manifold, the question has been investigated thoroughly. The fundamental one is the Calabi conjecture solved by Yau \cite{Ya1978} and which, in particular, indicates the existence of nonpositive constant scalar curvature metrics if the first Chern class is nonpositive definite. In the noncompact case, Tian-Yau proved in \cite{TY1990,TY1991} that there must exist complete K\"{a}hler metrics with zero scalar curvatures on certain quasiprojective manifolds.
\par As a Hermitian analogue of the Yamabe problem \cite{Ya1960}, Angella-Calamai-Spotti \cite{ACS2017} initiated the study on the existence of non-K\"{a}hler metrics with constant Chern scalar curvatures in a given conformal class, which was called the Chern-Yamabe problem. More generally, the prescribed Chern scalar curvature problem consists of finding a conformally equivalent Hermitian metric such that whose Chern scalar curvature equals to a previously given function. By a classical result of Gauduchon \cite{Ga1977}, there exists on any compact complex manifold $X$ a Hermitian metric $g$ in every conformal class such that the fundamental form $\omega_g$ satisfies the now called Gauduchon condition $\pa\ol\pa\omega_g^{\dim_{\mathbb{C}}X-1}=0$. The problem in compact case, roughly speaking, can be divided into three cases according to the sign of the Gauduchon degree as an invariant of the conformal class, we refer readers to \cite{ACS2017,AF2022,CZ2020,Fu2022,Ho2021,HS2021,LM2018,Yu2023} for recent progress. It is worth pointing out that the positive case is the most difficult since the PDE loses its good analytic properties.
\par We initiate to study the prescribed Chern scalar curvature problem on noncompact manifolds. For this, we introduce the following assumption and unless indicated explicitly, we always assume it is satisfied throughout this paper.
\begin{mainassumption}\label{assump1}
Assume $(X,g)$ is a $n$-dimensional (not necessarily compact or complete)Gauduchon manifold admitting a nonnegative integrable $\phi_X\in C^\infty(X)$ such that if $f\in C^\infty(X)$ is nonnegative and bounded with $\sqrt{-1}\Lambda_{\omega_g}\pa\ol\pa f\geq-A\phi_X$ for a positive constant $A$, then
\begin{equation}\begin{split}
\sup\limits_{X}f\leq C_A(1+\int_Xf\phi_X\dvol_{g}),
\end{split}\end{equation}
where $C_A$ is a positive constant depending on $A$ and $\dvol_g$ denotes the volume element.
\end{mainassumption}
We first provide an existence result on the prescribed Chern scalar curvature problem on noncompact non-K\"{a}hler manifolds. In fact, we prove
\begin{theorem}[See Theorem \ref{KWthm}]\label{thm1}
Suppose that the Chern scalar curvature $S_g^{Ch}$ satisfies $\int_XS^{Ch}_g\dvol_g<0$ and $|S^{Ch}_g|\leq\Lambda\phi_X$ for a constant $\Lambda$. Let $h$ be a smooth nonpositive and nonzero function on $X$ with $|h|\leq\Lambda\phi_X$, then there exists a bounded and conformally equivalent Hermitian metric $\tilde{g}$ such that whose Chern scalar curvature $S_{\tilde{g}}^{Ch}=h$.
\end{theorem}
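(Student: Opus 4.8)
The plan is to recast the prescription problem as a single semilinear elliptic equation for the conformal factor and to solve it by a direct variational argument, the lack of compactness of $X$ being compensated for by the Main Assumption. Writing the sought metric as $\tilde g=e^{2u/n}g$, so $\omega_{\tilde g}=e^{2u/n}\omega_g$, a direct computation from the first Chern-Ricci form $\rho_g=-\sqrt{-1}\pa\ol\pa\log\det g$ gives
\begin{equation*}
S_{\tilde g}^{Ch}=e^{-2u/n}\bigl(S_g^{Ch}-2\sqrt{-1}\Lambda_{\omega_g}\pa\ol\pa u\bigr),
\end{equation*}
so that, writing $\Box_g:=\sqrt{-1}\Lambda_{\omega_g}\pa\ol\pa$, the requirement $S_{\tilde g}^{Ch}=h$ is equivalent to the Kazdan-Warner type equation
\begin{equation*}
2\Box_g u=S_g^{Ch}-h\,e^{2u/n}.
\end{equation*}
Two features of the hypotheses are already visible here: because $h\le 0$ the zeroth order term $-h\,e^{2u/n}$ is nonnegative and nondecreasing in $u$, which gives the equation a comparison principle and the variational functional below its good (concavity) sign; and a bounded solution $u$ is exactly what is wanted, since then $e^{2u/n}$ lies between two positive constants and $\tilde g$ is a bounded metric uniformly equivalent to $g$.

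The equation above is (a constant multiple of) the Euler-Lagrange equation of
\begin{equation*}
E(u)=\int_X\Bigl(-|\pa u|_g^2-S_g^{Ch}\,u+\tfrac{n}{2}\,h\,e^{2u/n}\Bigr)\dvol_g.
\end{equation*}
Since $h\le 0$, the term $\tfrac n2 h\,e^{2u/n}$ is concave in $u$ and the Dirichlet term $-|\pa u|_g^2$ is concave as well, so $E$ is (strictly) concave; I would therefore look for a maximizer. The two sign hypotheses furnish the anti-coercivity needed for the direct method: along the constants $u\equiv t$ one has $E=-t\int_X S_g^{Ch}\dvol_g+\tfrac n2 e^{2t/n}\int_X h\,\dvol_g$, which tends to $-\infty$ as $t\to+\infty$ because $\int_X h\,\dvol_g<0$, and also as $t\to-\infty$ because $\int_X S_g^{Ch}\dvol_g<0$; the term $-|\pa u|_g^2$ controls the nonconstant directions. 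All the integrals are finite thanks to $|S_g^{Ch}|,|h|\le\Lambda\phi_X$ and the integrability of $\phi_X$. A maximizing sequence is thus bounded in the relevant energy norm, and a maximizer, which solves the equation weakly, should be extracted in the limit.

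The analytic heart of the argument, and where the Main Assumption is indispensable, is to promote this weak solution to a genuine bounded smooth one on the noncompact $X$, i.e.\ to establish a uniform two-sided $C^0$ bound. From the equation and $h\le 0$ one has $\Box_g u=\tfrac12\bigl(S_g^{Ch}-h\,e^{2u/n}\bigr)\ge\tfrac12 S_g^{Ch}\ge-\tfrac{\Lambda}{2}\phi_X$, so applying the Main Assumption with $A=\Lambda/2$ bounds $\sup_X u$ by a constant plus $\int_X u\,\phi_X\dvol_g$; once this upper bound is in hand, the reverse inequality for $-u$ (the term $\tfrac12 h\,e^{2u/n}$ now being controlled because $e^{2u/n}$ is bounded above) lets the Main Assumption also bound $\inf_X u$. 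The outstanding point is to bound the weighted integral $\int_X u\,\phi_X\dvol_g$, which I would obtain by testing the equation against cutoffs of $\phi_X$ and playing the favorable sign of $h$ against the curvature term, using $\int_X S_g^{Ch}\dvol_g<0$, the bounds $|S_g^{Ch}|,|h|\le\Lambda\phi_X$, and the integrability of $\phi_X$. With a uniform $C^0$ bound, interior Schauder estimates give $C^\infty_{\loc}$ control, $u$ is smooth and bounded, and $\tilde g=e^{2u/n}g$ is the required bounded conformally equivalent metric with $S_{\tilde g}^{Ch}=h$.

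I expect the main obstacle to be exactly this passage from a weak, variationally produced solution to a bounded classical one in the absence of compactness: the Main Assumption only estimates a supremum by a weighted integral, and converting it into a genuine uniform $L^\infty$ bound --- equivalently, controlling $\int_X u\,\phi_X\dvol_g$ with only the integrability of $\phi_X$ and the sign condition $\int_X S_g^{Ch}\dvol_g<0$ at one's disposal --- is the crux (this is what the general existence result, Theorem \ref{KWthm}, is designed to handle). By contrast the conformal reduction, the concavity and anti-coercivity of $E$, and the elliptic regularity are all comparatively routine. A sub- and super-solution scheme on an exhaustion of $X$, the super-solution coming from $h\le 0$ and the sub-solution from $\int_X S_g^{Ch}\dvol_g<0$, would be a viable alternative to the variational construction, but it runs into the same a priori estimate as its essential difficulty.
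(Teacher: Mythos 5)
Your conformal reduction and your identification of the crux (a uniform $C^0$ bound via the Main Assumption, reduced to controlling a weighted integral of $u$) are consistent with the paper, but the proposed variational route has a structural gap: the equation $\sqrt{-1}\Lambda_{\omega_g}\pa\ol\pa u+he^{u}=S^{Ch}_g$ is \emph{not} the Euler--Lagrange equation of your functional $E$ on a general Gauduchon manifold. The operator $\sqrt{-1}\Lambda_{\omega_g}\pa\ol\pa$ differs from (half) the Beltrami Laplacian by a first-order drift term involving the torsion, and it is self-adjoint only under the balanced condition $d\omega_g^{n-1}=0$; the Gauduchon condition $\pa\ol\pa\omega_g^{n-1}=0$ assumed here only yields the integration-by-parts identity $\int_Xu\,\sqrt{-1}\Lambda_{\omega_g}\pa\ol\pa u\,\dvol_g=-\tfrac12\int_X|du|^2\dvol_g$ used for energy estimates, not a variational structure (the paper itself notes that $(\ref{KW1})$ and $(\ref{KW2})$ ``generally differ by a gradient term''; this non-variationality is exactly why flow and continuity methods are used in this literature). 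A maximizer of $E$ would solve the wrong equation. The paper instead solves $\epsilon$-regularized Dirichlet problems $\sqrt{-1}\Lambda_{\omega_g}\pa\ol\pa u_{\epsilon,j}+he^{u_{\epsilon,j}}=f+\epsilon u_{\epsilon,j}$ on an exhaustion $\{X_j\}$ via a heat flow, for which the maximum principle gives $|u_{\epsilon,j}|\le 2\Lambda\|\phi_X\|_{L^\infty}/\epsilon$ for free.

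Two further gaps remain even if you replace the variational construction by your suggested sub/super-solution scheme. First, the Main Assumption applies only to \emph{nonnegative bounded} test functions, so you cannot feed $u$ into it directly as written; the paper applies it to $\log(e^{u_\epsilon}+e^{-u_\epsilon})$, which is nonnegative, bounded (thanks to the $\epsilon$-term), and satisfies $\sqrt{-1}\Lambda_{\omega_g}\pa\ol\pa\log(e^{u_\epsilon}+e^{-u_\epsilon})\ge-2\Lambda\phi_X$, yielding $|u_\epsilon|\le C(1+\int_X|u_\epsilon|\phi_X\dvol_g)$ with $C$ independent of $\epsilon$. Second, and most importantly, the bound on $\int_X|u_\epsilon|\phi_X\dvol_g$ that you defer to ``testing against cutoffs of $\phi_X$'' is the actual content of the proof and is not supplied: the paper argues by contradiction, setting $l_i=\int_X|u_{\epsilon_i}|\phi_X\dvol_g\to\infty$, normalizing $v_i=l_i^{-1}u_{\epsilon_i}$, using the $L^2$ gradient bound (obtained from the Gauduchon integration by parts, not from a variational principle) to extract a constant weak limit $v_0$ with $\int_X|v_0|\phi_X\dvol_g=1$, showing $v_0>0$ from $\int_Xf\dvol_g<0$, and finally deriving $\int_\Omega h\,\dvol_g\ge0$ on compact sets, contradicting $h\le0$, $h\not\equiv0$. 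Without this (or an equivalent) argument the a priori estimate, and hence the existence proof, is incomplete.
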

%\begin{rem}
%In compact case, Theorem \ref{thm1} recovers \cite[Theorem 2.5]{Fu2022} and \cite[Proposition 2.1]{LY2005} which were proved via the standard method of super-sub solutions. The same conclusion was also obtained in \cite[Theorem 1.1]{Ho2021} and \cite[Corollary 1.3]{Yu2023} via the flow method, where the extra balanced condition $d\omega_g^{n-1}=0$ is assumed.
%\end{rem}
\begin{rem}
An important feature of Theorem \ref{thm1} is that no additional hypothesis required other than our Main Assumption. This makes the result applicable on prescribing Chern scalar curvatures on quasi-compact complex manifolds(see Corollary \ref{thm2}) and solving the Kazdan-Warner type equation $(\ref{KW1})$ on $\mathbb{C}$(see Proposition \ref{vortex1}), as well as on $M\times\mathbb{C}$ for any compact complex manifold $M$(see Proposition \ref{vortex2}).
\end{rem}
In compact case, Theorem \ref{thm1} recovers \cite[Theorem 2.5]{Fu2022} and \cite[Proposition 2.1]{LY2005} which were proved via the standard method of super-sub solutions. The same conclusion was also obtained in \cite[Theorem 1.1]{Ho2021} and \cite[Corollary 1.3]{Yu2023} via the flow method, where the extra balanced condition $d\omega_g^{n-1}=0$ is further assumed. In noncompact case, to prove Theorem \ref{thm1}, we employ the approach of a combination of heat flow and continuity methods to solve the following Kazdan-Warner type equation
\begin{equation}\begin{split}\label{KW1}
\sqrt{-1}\Lambda_{\omega_g}\pa\ol\pa u+he^{u}=f,
\end{split}\end{equation}
on noncompact manifolds, where $f$, $h$ are two functions satisfying certain conditions. The key point in our argument is the uniform zeroth order estimate on noncompact spaces. Arising from the basic geometric problem on prescribing Gaussian curvatures on Riemannian surfaces, the classical Kazdan-Warner equation \cite{KW1974a} takes the form
\begin{equation}\begin{split}\label{KW2}
\Delta_g u+he^{2u}=f,
\end{split}\end{equation}
where $\Delta_g$ denotes the Beltrami-Laplacian. Up to a constant multiple, $(\ref{KW1})$ coincides with $(\ref{KW2})$ for balanced metrics and they generally differ by a gradient term. It is also mentioned that our method can be further utilized for improving the results in \cite{WZ2008,WZ2021}.
\par Recall the singular Yamabe problem: given a compact Riemannian manifold $(M,g)$ and a closed subset $\Sigma$, find a conformally equivalent metric $\tilde{g}$ on $M\setminus\Sigma$ which has constant scalar curvature. The Chern-Yamabe problem generalizes naturally to the
singular case as well and Corollary \ref{thm2} below shows the problem can be solved in certain cases. As mentioned in \cite{DL2017}, it would be interesting to find non-K\"{a}hler metrics which simultaneously solve both the singular Yamabe and singular Chern-Yamabe problems.
\begin{corollary}[See Proposition \ref{assumption}]\label{thm2}
Suppose that $X=\ol{X}\setminus\Sigma$ is the complement of a complex analytic subset $\Sigma$ in a compact complex manifold $\ol{X}$ with complex codimension at least two, $g=\ol{g}|_{X}$ for a Gauduchon metric $\ol{g}$ on $\ol{X}$ and $\int_XS^{Ch}_g\dvol_g<0$. Let $h$ be a smooth bounded nonpositive and nonzero function on $X$, then there exists a bounded and conformally equivalent Hermitian metric $\tilde{g}$ such that $S^{Ch}_{\tilde{g}}=h$.
\end{corollary}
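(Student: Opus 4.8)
The plan is to deduce the statement directly from Theorem \ref{thm1} by verifying, for the quasi-compact manifold $(X,g)$, both the Main Assumption and the curvature hypotheses of that theorem. The natural candidate for the weight is the constant function $\phi_X\equiv 1$. Since $\Sigma$ is a complex analytic subset it has measure zero, so $\vol_g(X)=\vol_{\ol g}(\ol X)<\infty$ because $\ol X$ is compact; hence $\phi_X\equiv 1$ is nonnegative and integrable on $X$.

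First I would dispose of the curvature hypotheses. As $g=\ol g|_X$ is the restriction of a smooth Hermitian metric on the compact manifold $\ol X$, its Chern scalar curvature $S^{Ch}_g$ is the restriction of the smooth function $S^{Ch}_{\ol g}\in C^\infty(\ol X)$, which is bounded; enlarging the constant if necessary this gives $|S^{Ch}_g|\le\Lambda\phi_X$. The function $h$ is bounded by hypothesis, so the same $\Lambda$ (further enlarged) satisfies $|h|\le\Lambda\phi_X$, and $\int_XS^{Ch}_g\dvol_g<0$ is assumed. Thus all hypotheses of Theorem \ref{thm1} hold once the Main Assumption is established, and the conclusion --- a bounded conformally equivalent $\tilde g$ with $S^{Ch}_{\tilde g}=h$ --- follows at once.

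The substance of the proof is therefore the Main Assumption: for every bounded nonnegative $f\in C^\infty(X)$ with $\sqrt{-1}\Lambda_{\omega_g}\pa\ol\pa f\ge -A$, one must bound $\sup_Xf$ by $C_A(1+\int_Xf\dvol_g)$. My approach is to transplant the problem to the compact Gauduchon manifold $(\ol X,\ol g)$. On $\ol X$ the operator $\square\,\cdot=\sqrt{-1}\Lambda_{\ol\omega}\pa\ol\pa\,\cdot$ satisfies $\int_{\ol X}\square\vp\,\dvol_{\ol g}=0$ for all $\vp\in C^\infty(\ol X)$ --- this is exactly the Gauduchon condition $\pa\ol\pa\ol\omega^{n-1}=0$ after integrating by parts twice on the closed manifold --- so $\square$ admits a Green's function and the desired sup estimate is classical on the compact side (it is the compact instance of the Main Assumption, and recovers the compact results quoted after Theorem \ref{thm1}). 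The task is thus reduced to showing that the a priori subsolution bound on $X$ survives the passage across $\Sigma$.

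Here the codimension hypothesis is decisive and will be the main obstacle. Because $\Sigma$ has complex codimension at least two, its real Hausdorff codimension is at least four, so tubular neighborhoods $\Sigma_\varepsilon=\{x\in\ol X:\dist_{\ol g}(x,\Sigma)<\varepsilon\}$ have capacity tending to zero: there exist cutoffs $\chi_\varepsilon\in C^\infty(\ol X)$ with $\chi_\varepsilon\equiv 0$ near $\Sigma$, $\chi_\varepsilon\equiv 1$ outside $\Sigma_{2\varepsilon}$, and $\int_{\ol X}\bigl(|\pa\chi_\varepsilon|^2_{\ol g}+|\square\chi_\varepsilon|\bigr)\dvol_{\ol g}\to 0$ as $\varepsilon\to 0$. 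Testing the distributional inequality $\square f\ge -A$ against $\chi_\varepsilon$ (equivalently, inserting the cutoff into the Green's representation) and using that $f\in L^\infty$, one checks that the contributions supported in $\Sigma_\varepsilon$ vanish in the limit; this is precisely where boundedness of $f$ together with real codimension at least four is needed to kill the error terms. Consequently $f$ extends across $\Sigma$ to a distributional subsolution of $\square$ on all of $\ol X$, a removable-singularity statement, after which the compact Green's function estimate yields $\sup_Xf=\sup_{\ol X}f\le C_A(1+\int_{\ol X}f\,\dvol_{\ol g})=C_A(1+\int_Xf\dvol_g)$, completing the verification. The delicate point is making the capacity estimate uniform enough to control the gradient and Laplacian error terms simultaneously; once that is in place the remainder is routine.
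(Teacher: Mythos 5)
Your reduction to Theorem \ref{thm1} with $\phi_X\equiv1$ and the verification of the curvature hypotheses (boundedness of $S^{Ch}_g=S^{Ch}_{\ol g}|_X$ and of $h$, finiteness of $\vol_g(X)$) coincide with the paper's; the only real content is the Main Assumption, which is the paper's Proposition \ref{assumption}, and there your route genuinely differs. The paper never extends $f$ across $\Sigma$ and never invokes a Green's function: it runs a Moser iteration directly on $X$, setting $\tilde f=f+1$ so that $\sqrt{-1}\Lambda_{\omega_g}\pa\ol\pa\tilde f\ge -A\tilde f$, multiplying by $\eta_\delta^2\tilde f^{q}$ for cutoffs $\eta_\delta$ vanishing near $\Sigma$ with $\int_{\ol X}|d\eta_\delta|^2\dvol_{\ol g}=O(\delta^{2})\to0$, integrating by parts (the Gauduchon condition controls the $\pa\omega_{\ol g}^{n-1}$ term), and combining with the Sobolev inequality on $\ol X$ to obtain the reverse H\"older inequality $\bigl(\int_X\tilde f^{\frac{n}{n-1}(q+1)}\dvol_g\bigr)^{\frac{n-1}{n}}\le C(q+1)^2\int_X\tilde f^{q+1}\dvol_g$, which is then iterated. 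Your plan --- first extend the inequality $\sqrt{-1}\Lambda_{\omega_{\ol g}}\pa\ol\pa f\ge -A$ weakly across $\Sigma$ using bounded $f$ and the vanishing capacity of a set of real codimension at least four, then apply the sup bound on the compact Gauduchon manifold --- is precisely the strategy the paper uses in Lemma \ref{Alemma1} via \cite[Proposition 2.2]{Si1988}, so it is a legitimate alternative here; both arguments consume the codimension hypothesis through the same cutoff functions. What each buys: the Moser iteration is self-contained and avoids any Green's function theory for the non-self-adjoint operator $\sqrt{-1}\Lambda_{\omega_{\ol g}}\pa\ol\pa$ (for which the existence of the sup estimate does require that constants span the cokernel --- exactly the Gauduchon condition, as you note), while your version is shorter once the compact estimate and the removable-singularity lemma are granted. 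Two cautions on the latter: arrange the weak formulation so that \emph{all} derivatives fall on the test function $\chi_\varepsilon\psi$ (you control neither $df$ nor $\pa\ol\pa f$ near $\Sigma$), and note that your error terms then require $\int_{\ol X}\bigl(|\pa\chi_\varepsilon|+|\pa\ol\pa\chi_\varepsilon\wedge\omega_{\ol g}^{n-1}|\bigr)\to0$, a second-derivative capacity statement that is slightly stronger than the $\int|d\chi_\varepsilon|^2\to0$ the paper uses; it does hold for real codimension at least four, but the cleaner standard path (Simpson's) is to first derive $f\in L^2_1$ near $\Sigma$ from the inequality itself and then extend, which needs only the first-order capacity.
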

%\begin{corollary}[See Proposition \ref{assumption}]\label{thm2}
%Suppose that $X=\ol{X}\setminus K$ is the complement of a closed subset $K$ in a compact complex manifold $\ol{X}$ with Hausdorff measure $\mathcal{H}^{n-2}(K)=0$, $g=\ol{g}|_{X}$ for a Gauduchon metric $\ol{g}$ on $\ol{X}$ and $\int_XS^{Ch}_g\dvol_g<0$. Let $h$ be a smooth bounded nonpositive and nonzero function on $X$, then there exists a bounded and conformally equivalent Hermitian metric $\tilde{g}$ such that $S^{Ch}_{\tilde{g}}=h$.
%\end{corollary}
%\begin{rem}
%By some modifications, we can also obtain an existence result on the solution of equation $(\ref{KW2})$ on the complement of a closed subset $K$ in a compact Riemannian manifold with Hausdorff measure $\mathcal{H}^{n-2}(K)=0$, see Corollary \ref{thm4}.
%\end{rem}
%\begin{rem}
%In dimension one, many results on prescribing curvatures on punctured Riemannian surfaces can be founded in \cite{De2004,HT1992,Mc1988,Mc1993,Ta1992,Tr1991} and references therein.
%\end{rem}
Next we show Theorem \ref{thm1} can be also used to detect the multiplicity of conformal metrics with prescribed curvatures on the entire plane, which is equivalent to study the non-uniqueness of solutions to the following semi-linear elliptic equation on $\mathbb{R}^2$:
\begin{equation}\begin{split}\label{eqR2}
\Delta u+Ke^{2u}=0,
\end{split}\end{equation}
where $\Delta=\pa^2_x+\pa^2_y$ and $K\in C^\infty(\mathbb{R}^2)$ is the candidate curvature function. The first result concerning $(\ref{eqR2})$ seems due to Ahlfors \cite{Ah1938} and he proved that there is no entire solution if $K$ is a negative constant. Sattinger \cite{Sa1972} obtained a more general unsolvable result when $K\leq0$ and $|K|\geq C|x|^{-s}$ at infinity for two positive constants $C$ and $s\leq2$.
\par By explicitly constructing radially symmetric sub-solutions and super-solutions together with the method of super-sub solutions for the entire plane, the first nontrivial existence result on (multiple)solutions to $(\ref{eqR2})$ was founded by Ni.
\begin{theorem}[Ni \cite{Ni1982}, Theorem 1.3]\label{Nithm}
If $K\in C^\infty(\mathbb{R}^2)$ is nonpositive and nonzero with decay $|K|\leq C|z|^{-s}$ at infinity, for two constants $s>2$ and $C$, then $(\ref{eqR2})$ possesses infinitely many solutions on $\mathbb{R}^2$ and each of them has the following logarithmic growth at infinity for two constants $C_1$ and $C_2$,
\begin{equation}\begin{split}\label{Niformsofsolutions}
k\log|z|-C_1\leq u_k\leq k\log|z|+C_2.
\end{split}\end{equation}
\end{theorem}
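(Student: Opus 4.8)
The plan is to recast Ni's equation (\ref{eqR2}) as a prescribed Chern scalar curvature problem on the Riemann surface $\mathbb{C}$ and then feed it into Theorem \ref{thm1}. Since $\mathbb{C}$ has complex dimension one, every Hermitian metric is automatically Gauduchon, the Chern scalar curvature coincides up to a fixed normalizing constant with the Gaussian curvature, and the flat metric $g_0$ is balanced, so that (\ref{KW1}) and (\ref{KW2}) agree up to a constant multiple. Fix $\rho=\tfrac12\log(1+|z|^2)$, which is smooth on all of $\mathbb{C}$, satisfies $\rho=\log|z|+o(1)$ at infinity, and obeys the elementary identity $\Delta\rho=2(1+|z|^2)^{-2}\geq0$ with $\int_{\mathbb{C}}\Delta\rho\,dxdy=2\pi$. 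For a parameter $k>0$ in a range to be fixed below, I would take as reference the conformal metric $g_k=(1+|z|^2)^kg_0=e^{2k\rho}g_0$; its Chern scalar curvature is $S^{Ch}_{g_k}=-k\,e^{-2k\rho}\Delta\rho$, whence $\int_{\mathbb{C}}S^{Ch}_{g_k}\dvol_{g_k}=-k\int_{\mathbb{C}}\Delta\rho\,dxdy=-2\pi k<0$. The point of this choice is that the substitution $u=k\rho+v$ turns (\ref{eqR2}) into exactly the prescribed Chern scalar curvature equation for $(\mathbb{C},g_k)$ with target curvature $h=K$, namely $\Delta_{g_k}v+Ke^{2v}=S^{Ch}_{g_k}$.

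With this reduction in place, I would apply Theorem \ref{thm1} to $(\mathbb{C},g_k)$ with $h=K$. Three hypotheses must be checked: $\int_{\mathbb{C}}S^{Ch}_{g_k}\dvol_{g_k}<0$ (already verified), the pointwise bounds $|S^{Ch}_{g_k}|\leq\Lambda\phi_X$ and $|K|\leq\Lambda\phi_X$, and, underlying all of this, the Main Assumption for $(\mathbb{C},g_k)$. The natural choice is to let $\phi_X$ be a fixed smooth nonnegative function dominating both $|S^{Ch}_{g_k}|$ and $|K|$. Since $|S^{Ch}_{g_k}|=2k(1+|z|^2)^{-k-2}$, one has $\int_{\mathbb{C}}|S^{Ch}_{g_k}|\dvol_{g_k}=2k\pi<\infty$ for every $k$, while from $|K|\leq C|z|^{-s}$ one checks $\int_{\mathbb{C}}|K|\dvol_{g_k}<\infty$ precisely when $s>2k+2$, that is when $0<k<\tfrac{s-2}{2}$. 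This is exactly the admissible range of growth rates: it is a nonempty open interval because $s>2$, and it is precisely here that $\phi_X$ can be taken integrable against $\dvol_{g_k}$ while still dominating $K$.

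It remains to verify the Main Assumption for $(\mathbb{C},g_k)$, and this I expect to be the main obstacle. Unwinding the definition, it amounts to a weighted mean-value (Harnack-type) inequality on the plane: for nonnegative bounded $F$ with $\Delta F\geq-A'\mu_k$, where $\mu_k=\phi_X e^{2k\rho}$ is a fixed finite weight, one must bound $\sup_{\mathbb{C}}F$ by $C_A(1+\int_{\mathbb{C}}F\mu_k\,dxdy)$. I would establish this by subtracting off the Newtonian potential of $\mu_k$ to reduce to a function that is subharmonic and bounded above on $\mathbb{C}$, exploit the Liouville property of such functions, and then control the potential term by the finite mass of $\mu_k$; alternatively, one may invoke the construction already carried out for $\mathbb{C}$ in Proposition \ref{vortex1} and adapt it to the weight $e^{2k\rho}$. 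The delicate points are the uniform control of the potential at infinity and ensuring that the estimate is compatible with the exact integrability threshold $s>2k+2$.

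Granting the Main Assumption, Theorem \ref{thm1} produces, for each $k\in(0,\tfrac{s-2}{2})$, a bounded conformally equivalent metric $\tilde g_k=e^{2v_k}g_k$ with $S^{Ch}_{\tilde g_k}=K$, where boundedness means $v_k$ is a bounded function; equivalently $u_k=k\rho+v_k$ solves (\ref{eqR2}) on $\mathbb{R}^2$. Since $v_k$ is bounded and $k\rho=k\log|z|+o(1)$ at infinity, $u_k$ satisfies the two-sided bound (\ref{Niformsofsolutions}) with suitable constants $C_1,C_2$. Finally, for $k\neq k'$ the difference $u_k-u_{k'}\sim(k-k')\log|z|$ is unbounded, so these solutions are pairwise distinct; letting $k$ range over the nondegenerate interval $(0,\tfrac{s-2}{2})$ yields infinitely many solutions, completing the proof.
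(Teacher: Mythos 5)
Your overall reduction is exactly the one the paper uses: you introduce the same one\--parameter family of reference metrics $g_k=(1+|z|^2)^kg_0$, compute $S^{Ch}_{g_k}=-ke^{-2k\rho}\Delta\rho$ with total integral $-2\pi k<0$, substitute $u=k\rho+v$ to convert $(\ref{eqR2})$ into the prescribed Chern scalar curvature equation for $(\mathbb{C},g_k)$, apply Theorem \ref{thm1} to get a bounded $v_k$, and distinguish the solutions by their logarithmic growth rates. This is precisely the route through Lemma \ref{Alemma2}, Corollary \ref{Acor2}, Theorem \ref{Athm1} and Proposition \ref{Aprop2}. The computations you do carry out are correct.

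The genuine gap is the step you yourself flag as ``the main obstacle'': verifying the Main Assumption for $(\mathbb{C},g_k)$. Your sketch does not work as stated. If $N$ is the Newtonian potential of the finite measure $\mu_k$, then $N(z)\sim\frac{M}{2\pi}\log|z|$ at infinity with $M=\int\mu_k>0$, so $F+A'N$ is subharmonic but only \emph{logarithmically} bounded above, and the Liouville theorem for subharmonic functions bounded above does not apply; you cannot subtract off the logarithm either, since $\tfrac12\log(1+|z|^2)$ is itself subharmonic. More importantly, even a successful Liouville argument of this type would bound $\sup F$ by a quantity involving $\int\mu_k$, whereas the Main Assumption demands the specific linear estimate $\sup F\leq C_A(1+\int F\mu_k)$ in which the integral is of $F$ \emph{against} the weight. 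The paper obtains this by compactifying to $\mathbb{CP}^1$: it checks that the differential inequality extends weakly across $\infty$ (this uses $\alpha^{-1}\phi\in L^q$ for some $q>1$, via \cite[Proposition 2.2]{Si1988}) and then invokes Simpson's mean-value inequality on the compact surface, converting $\int_{\mathbb{CP}^1}f\,\omega_g$ into $\int f\phi_X\dvol$ using the boundedness of $\alpha\phi^k\psi^{-1}$. That last condition is exactly what forces the restriction $k\geq l-2$ (with $l=s/2$), so the admissible range is $[l-2,l-1)\cap(0,\infty)$ rather than your asserted $(0,\tfrac{s-2}{2})$; for $s>4$ your interval includes values of $k$ for which no verification of the Main Assumption is available. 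This discrepancy does not affect the conclusion (any nonempty interval of $k$'s yields infinitely many solutions), but the missing mean-value inequality is the substantive content of the proof and must be supplied, e.g.\ by the compactification argument of Lemma \ref{Alemma2}. (Also note that your ``alternative'' of invoking Proposition \ref{vortex1} is circular, since that proposition rests on Corollary \ref{Acor2}, i.e.\ on the very estimate in question.)
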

\begin{rem}
A great deal of work has been devoted to understanding the large variety of solutions to $(\ref{eqR2})$ on $\mathbb{R}^2$ since then, one may consult \cite{Av1986,CK2000,CL1991,CL1993,CL1997,CL2000,
CN1991a,CN1991b,KW1974b,Li2000,Mc1984,Mc1985} which is by far an incomplete list of important contributions.
\end{rem}
\par Below comes our multiplicity result.
\begin{theorem}[See Proposition \ref{Aprop2}]\label{thm3}
If $K\in C^\infty(\mathbb{R}^2)$ is nonpositive and nonzero with decay $|K|\leq\Lambda(1+|z|^2)^{-l}$ for two constants $l>1$ and $\Lambda$, then $(\ref{eqR2})$ possesses infinitely many solutions on $\mathbb{R}^2$ and they have the forms
\begin{equation}\begin{split}
2u_k=v_k+k\log(1+|z|^2),
\end{split}\end{equation}
with $v_k\in C^\infty(\mathbb{R}^2)\cap L^\infty(\mathbb{R}^2)$, $|dv_k|\in L^2(\mathbb{R}^2)$ and $k$ constant, subject to
\begin{equation}\begin{split}
k\in[l-2,l-1)\cap(0,\infty).
\end{split}\end{equation}
On the other hand, any solution $u$ of the form
\begin{equation}\begin{split}\label{formsofsolutions}
2u=v+k\log(1+|z|^2),
\end{split}\end{equation}
with $v\in C^\infty(\mathbb{R}^2)\cap L^\infty(\mathbb{R}^2)$ and constant $k\in[l-2,l-1)\cap(0,\infty)$, is exactly $u_k$.
\end{theorem}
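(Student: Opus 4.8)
The plan is to recast $(\ref{eqR2})$ as a prescribed Chern scalar curvature problem on $\mathbb{C}$ and to invoke Theorem \ref{thm1} (equivalently the Kazdan--Warner existence result Theorem \ref{KWthm}). Substituting $2u=v+k\log(1+|z|^2)$ and using $\Delta\log(1+|z|^2)=\tfrac{4}{(1+|z|^2)^2}$, equation $(\ref{eqR2})$ becomes, for the unknown $v$,
\begin{equation*}
\tfrac12\Delta v+K(1+|z|^2)^{k}e^{v}=-\tfrac{2k}{(1+|z|^2)^2}.
\end{equation*}
Dividing by $(1+|z|^2)^{k}$ and using that in complex dimension one $\sqrt{-1}\Lambda_{\omega_{g_k}}\pa\ol\pa v=\tfrac12(1+|z|^2)^{-k}\Delta v$ on the surface $(\mathbb{C},g_k)$ with fundamental form $\omega_{g_k}=(1+|z|^2)^{k}\omega_0$ ($\omega_0$ the Euclidean form), this is exactly $(\ref{KW1})$ with target $h=K$ and right-hand side $f=S^{Ch}_{g_k}=-\tfrac{2k}{(1+|z|^2)^{k+2}}$, the Chern scalar curvature of $g_k$ (a fixed positive multiple of its Gaussian curvature). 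Every Hermitian metric on a Riemann surface is Gauduchon, so $g_k$ is admissible, and a bounded $v$ yields a solution $u$ of the asserted form.

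With this dictionary the existence of $u_k$ reduces to checking the hypotheses of Theorem \ref{thm1} for $(\mathbb{C},g_k)$, $S^{Ch}_{g_k}$ and $h=K$. First, $(\mathbb{C},g_k)$ satisfies the Main Assumption with weight $\phi_X\asymp(1+|z|^2)^{-l}$, which is the content of Proposition \ref{vortex1}; the required integrability $\int_{\mathbb{C}}\phi_X\dvol_{g_k}<\infty$ reads $\int_{\mathbb{C}}(1+|z|^2)^{k-l}\,dx\,dy<\infty$, i.e.\ $k<l-1$, the upper endpoint. Second, $\int_{\mathbb{C}}S^{Ch}_{g_k}\dvol_{g_k}$ is a negative multiple of $\int_{\mathbb{C}}(1+|z|^2)^{-2}\,dx\,dy=\pi$, hence strictly negative exactly because $k>0$. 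Third, the bound $|S^{Ch}_{g_k}|\le\Lambda\phi_X$ becomes $(1+|z|^2)^{-(k+2)}\lesssim(1+|z|^2)^{-l}$, i.e.\ $k\ge l-2$, the lower endpoint; and $|K|\le\Lambda(1+|z|^2)^{-l}$ shows at once that $h=K$ is nonpositive, nonzero and satisfies $|h|\le\Lambda\phi_X$. Thus for every $k\in[l-2,l-1)\cap(0,\infty)$ Theorem \ref{thm1} produces a bounded $v_k$, and the admissible range is precisely the overlap of these two constraints.

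It remains to record regularity and extract multiplicity. Smoothness $v_k\in C^\infty(\mathbb{R}^2)$ is standard elliptic regularity and $v_k\in L^\infty(\mathbb{R}^2)$ is built into the conclusion of Theorem \ref{thm1}. For $|dv_k|\in L^2$ I would test the equation against $v_k$ on balls $B_R$ and integrate by parts: since $k<l-1$ the right-hand side $-\tfrac{2k}{(1+|z|^2)^2}-K(1+|z|^2)^{k}e^{v_k}$ lies in $L^1(\mathbb{R}^2)$, so $\int_{B_R}|dv_k|^2$ is bounded by $\|v_k\|_\infty$ times an $L^1$ norm plus a boundary term that can be made to vanish along a sequence $R_j\to\infty$ (the Dirichlet energy is conformally invariant in two dimensions, so this is intrinsic). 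Finally, distinct values of $k$ give solutions with distinct logarithmic growth rates, so the $u_k$ are pairwise distinct; since $[l-2,l-1)\cap(0,\infty)$ is a nondegenerate interval (nonempty and of positive length because $l>1$), this yields infinitely many solutions.

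For the converse, if $2u=v+k\log(1+|z|^2)$ is any solution with $v\in C^\infty\cap L^\infty$ and $k$ admissible, then $v$ solves the same equation $(\ref{KW1})$ as $v_k$, so it suffices to prove uniqueness of bounded solutions. Setting $w=v-v_k$ and applying the mean value theorem, $w$ satisfies $\sqrt{-1}\Lambda_{\omega_{g_k}}\pa\ol\pa w=-Ke^{\xi}w$ with $-Ke^{\xi}\ge0$ and bounded by a multiple of $\phi_X$; testing against $w$ and integrating by parts gives $\int_{\mathbb{C}}|dw|^2+\int_{\mathbb{C}}(-Ke^{\xi})w^2\le0$ once the boundary terms are killed, forcing $w\equiv0$ since $K\not\equiv0$. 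I expect the main obstacle to lie precisely here: controlling $w$ at infinity so as to discard the boundary terms and to run the maximum principle on the noncompact plane. This is exactly what the Main Assumption supplies—applied to $w^+$, which is subharmonic for the Chern Laplacian where $w>0$, it confines the positive part of $w$ and, together with the finite-energy bound from the previous paragraph, closes the argument; the role of the range $k\in[l-2,l-1)$ is to make a single weight $\phi_X\asymp(1+|z|^2)^{-l}$ simultaneously dominate $|K|$ and $|S^{Ch}_{g_k}|$ while staying integrable.
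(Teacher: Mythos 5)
Your existence argument is essentially the paper's own: conformally rescale to $g_k=(1+|z|^2)^k g_0$, verify the Main Assumption for $(\mathbb{R}^2,g_k)$ with weight $(1+|z|^2)^{-l}$ by compactifying to $\mathbb{CP}^1$, compute $S^{Ch}_{g_k}=-2k(1+|z|^2)^{-(k+2)}$, and feed $f=S^{Ch}_{g_k}$, $h=K$ into Theorem \ref{KWthm}; your three constraints ($k>0$ for negative total curvature, $k\ge l-2$ for $|S^{Ch}_{g_k}|\lesssim\phi_X$, $k<l-1$ for integrability of the weight) are exactly how the paper carves out $[l-2,l-1)\cap(0,\infty)$, and the distinct logarithmic growth rates give multiplicity as in Remark \ref{00000}. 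One small correction: the Main Assumption for $(\mathbb{R}^2,g_k)$ is Corollary \ref{Acor2} (via Lemma \ref{Alemma2} and Simpson's extension lemma across the point at infinity), not Proposition \ref{vortex1}, which is a downstream application. Also, the bound $|dv_k|\in L^2$ is already part of the conclusion of Theorem \ref{KWthm}; no separate cutoff argument is needed.

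The gap is in the uniqueness half. In the converse statement the competitor $v$ is only assumed to lie in $C^\infty\cap L^\infty$; you have no a priori finite Dirichlet energy for $v$, so the boundary terms in your integration by parts for $w=v-v_k$ cannot be discarded ``along a sequence $R_j\to\infty$'' --- the finite-energy bound you invoke belongs to $v_k$, not to $w$. Nor does the Main Assumption close the argument: applied to the bounded subharmonic function $w^+$ it returns only an upper bound of the form $\sup w^+\le C(1+\int w^+\phi_X)$, which is consistent with $w^+$ being any bounded nonzero constant and does not force $w\equiv0$. The paper avoids the energy method entirely: it forms $e^{v-v_k}+e^{v_k-v}$, computes
\begin{equation*}
\sqrt{-1}\Lambda_{\omega_{g_k}}\pa\ol\pa\bigl(e^{v-v_k}+e^{v_k-v}\bigr)\ \ge\ -K\bigl(e^{v-v_k}-e^{v_k-v}\bigr)\bigl(e^{v}-e^{v_k}\bigr)\ \ge\ 0,
\end{equation*}
so this is a \emph{globally} bounded subharmonic function on the parabolic plane $\mathbb{R}^2$, hence constant by the classical Liouville theorem; constancy kills the gradient term $(e^{v-v_k}+e^{v_k-v})|\pa(v-v_k)|^2$, so $v-v_k$ is constant, and then $K\not\equiv0$ forces the constant to vanish. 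Your observation that $w^+$ is subharmonic on $\{w>0\}$ can be salvaged along the same lines (extend by zero, apply Liouville to the bounded subharmonic extension, then rule out a positive constant using $K\not\equiv0$, and repeat for $w^-$), but the tool that actually closes the argument is the Liouville theorem for bounded subharmonic functions, not the Main Assumption, and as written your proof does not reach a contradiction.
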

%\begin{rem}
%Theorem \ref{thm3} is a specific version of a general result on multiple solutions to $(\ref{eqR2})$ on $\mathbb{R}^2$, which is stated in the Appendix.
%\end{rem}
%\begin{corollary}\label{cor1}
%If $K\in C^\infty(\mathbb{R}^2)$ is nonpositive and nonzero with decay $|K|\leq\Lambda(1+|z|^2)^{-l}$ for two constants $l>1$ and $\Lambda$, then there exists infinitely many metrics $g_k=e^{v_k}(1+|z|^2)^{k}g_0$ conformal to the standard metric $g_0$ on $\mathbb{R}^2$ such that $S^{Ch}_{g_k}=K$, where $v_k\in C^\infty(\mathbb{R}^2)\cap L^\infty(\mathbb{R}^2)$ and $|dv_k|\in L^2(\mathbb{R}^2)$.
%\end{corollary}
\begin{rem}
The method to prove Theorem \ref{thm3} is different in spirit from those already exist in the literature and the strategy seems to us more geometrically. Indeed, we shall transform the issue of the multiplicity of solutions into constructing infinitely many metrics meeting our Main Assumption so that Theorem \ref{thm1} can be applied. 
%It is tempting to expect that the same kind of observations could also be used to detect multiplicities of other elliptic equations, even in compact case.
\end{rem}
To recover Ni's Theorem \ref{Nithm} from Theorem \ref{thm3}, we may write $s=2l$ for $l>1$ and take large constant $B$(for which $|K|\leq C|z|^{-s}$ once $|z|\geq B$),
\begin{equation}\begin{split}
\Lambda=\max\{2C^l, \max\limits_{|z|\leq B}|K|(1+|B|^2)^l\},
\end{split}\end{equation}
then it is easy to see $|K|\leq\Lambda(1+|z|^2)^{-l}$ and hence Theorem \ref{thm3} guarantees the multiplicity of solutions with each of them satisfying
\begin{equation}\begin{split}
\frac{k}{2}\log(1+|z|^2)-\frac{1}{2}\sup\limits_{\mathbb{R}^2}|v_k|\leq u_k\leq\frac{1}{2}\sup\limits_{\mathbb{R}^2}|v_k|+\frac{k}{2}\log(1+|z|^2),
\end{split}\end{equation}
and then the logarithmic growth $(\ref{Niformsofsolutions})$ at infinity follows.
\par Finally, our results also solve the vortex equation on holomorphic line bundles over some noncompact manifolds, see Section 4. Note the vortex equation was introduced by Bradlow \cite{Br1990}, the solvability was characterized for closed manifolds via the result in \cite{KW1974a} and its higher rank analogue was studied in \cite{Br1991} which involves a stability-like criterion.
\par The rest of this paper is organized as follows. In Section 2, we first recall a few basic facts related the prescribed Chern scalar curvature problem. Then we establish an existence result and illustrate it applies to some quasi-compact complex manifolds. In Section 3, we first construct certain metrics on entire plane meeting the Main Assumption and then prove multiplicity results on the prescribed curvature equation. In Section 4, we apply the results to solve the vortex equation on holomorphic line bundles.
%\section*{Acknowledgements}
%The research was supported by the National Key R\&D Program of China 2020YFA0713100. Both authors are partially supported by NSF in China No.12141104. The first author is also supported by the Jiangsu Funding Program for Excellent Postdoctoral Talent 2022ZB282.
\section{Noncompact prescribed Chern scalar curvature problem}
Let $X$ be a $n$-dimensional complex manifold with the natural complex structure
\begin{equation}\begin{split}
J:TX\rightarrow TX,\ J^2=-\id.
\end{split}\end{equation}
In the presence of $J$, the tangent bundle $TX$ is in particular a complex vector bundle which will be denoted by $(TX,J)$. Since $J^2=-\id$, we have
\begin{equation}\begin{split}
TX\otimes\mathbb{C}=T^{1,0}X\oplus T^{0,1}X,
\end{split}\end{equation}
where $T^{1,0}X$ and $T^{1,0}X$ are the eigenspaces of eigenvalues $\sqrt{-1}$ and $-\sqrt{-1}$ respectively. In fact, for any $x\in X$ we can find vectors $\{e_i,Je_i\}_{i=1}^{n}$ forming a basis of $T_xX$ and then $T^{1,0}_xX$($T^{0,1}_xX$) is generated by $\{e_i-\sqrt{-1}Je_i\}_{i=1}^n$($\{e_i+\sqrt{-1}Je_i\}_{i=1}^n$). Let us write
\begin{equation}\begin{split}
A^{p,q}(X)=\wedge^p(T^{1,0}X)^\ast\otimes\wedge^q(T^{0,1}X)^\ast.
\end{split}\end{equation}
Then the usual exterior differential operator can be written as $d=\pa+\ol\pa$ with
\begin{equation}\begin{split}
\pa: A^{p,q}(X)\rightarrow A^{p+1,q}(X),\
\ol\pa: A^{p,q}(X)\rightarrow A^{p,q+1}(X).
\end{split}\end{equation}
We shall often identify $TX$ with $T^{1,0}X$ via
\begin{equation}\begin{split}\label{identification}
e_i\leftrightarrow\frac{1}{\sqrt{2}}(e_i-\sqrt{-1}Je_i),\  Je_i\leftrightarrow\frac{\sqrt{-1}}{\sqrt{2}}(e_i+\sqrt{-1}Je_i),\
i=1,...,n.
\end{split}\end{equation}
\par Given a Hermitian metric $g$ on $(X,J)$ which means a Riemannian metric satisfying $g(a,b)=g(Ja,Jb)$ for any $a,b\in TX$, we can equip $(TX,J)$ with a Hermitian structure
\begin{equation}\begin{split}\label{Hg}
H_g=g-\sqrt{-1}\omega_g,
\end{split}\end{equation}
where $\omega_g(a,b)=g(Ja,b)$ is the associated fundamental form. Using $(\ref{identification})$ and $(\ref{Hg})$, we know that $T^{1,0}X$ inherits a Hermitian structure given by
\begin{equation}\begin{split}
H_g(E_i,E_j)=g(E_i,\ol{E_j}),
\end{split}\end{equation}
where $E_i=e_i-\sqrt{-1}Je_i$, $\ol{E}_j=e_j+\sqrt{-1}Je_j$ and $g$ has been extended by $\mathbb{C}$-linearity. Hence $(T^{1,0}X,H_g)$ is Hermitian vector bundle equipped with a holomorphic structure induced by $J$, there is a unique connection $\nabla$(called the Chern connection) on $T^{1,0}X$ which is compatible with $H_g$ and also the  holomorphic structure.
\par In the local holomorphic coordinate, we write
\begin{equation}\begin{split}
\omega_g=\sqrt{-1}g_{i\ol{j}}dz^i\wedge d\ol{z}^{j},\
g_{i\ol{j}}=g(\frac{\pa}{\pa z^i},\frac{\pa}{\pa\ol{z}^j}).
\end{split}\end{equation}
The Chern connection $\nabla$ is characterized by the connection $1$-form
\begin{equation}\begin{split}
A=\pa g\cdot g^{-1}
\end{split}\end{equation}
where $g=(g_{i\ol{j}})_{n\times n}$ is regarded as a Hermitian matrix. Precisely, we have
\begin{equation}\begin{split}
\nabla\frac{\pa}{\pa z_i}=\frac{\pa g_{i\ol{l}}}{\pa z^k}g^{\ol{l}j}\frac{\pa}{\pa z^j}\otimes dz^k.
\end{split}\end{equation}
From this, it is known that $\nabla$ being torsion-free if and only of the Hermitian metric $g$ is K\"{a}hler(that is, $d\omega_g=0$). The curvature $R_\nabla$ is given by the curvature $2$-form
\begin{equation}\begin{split}
\Omega=dA-A\wedge A=\ol\pa(\pa g\cdot g^{-1}).
\end{split}\end{equation}
Precisely, the curvature tensor has components
\begin{equation}\begin{split}\label{R}
R_{i\ol{j}k\ol{l}}=-\frac{\pa^2g_{k\ol{l}}}{\pa z^i\pa\ol{z}^j}
+g^{\ol{q}p}\frac{\pa g_{p\ol{l}}}{\pa\ol{z}^j}\frac{\pa g_{k\ol{q}}}{\pa z^i},
\end{split}\end{equation}
where $R_{i\ol{j}k\ol{l}}=g([\nabla_{\frac{\pa}{\pa z^i}},\nabla_{\frac{\pa}{\pa\ol{z}^j}}]\frac{\pa}{\pa z^k}-\nabla_{[\frac{\pa}{\pa z^i},\frac{\pa}{\pa z^j}]}\frac{\pa}{\pa z^k},\frac{\pa}{\pa\ol{z}^l})$. Then the first Chern-Ricci form
\begin{equation}\begin{split}
Ric^{(1)}=\frac{\sqrt{-1}}{2\pi}\tr\Omega=\frac{\sqrt{-1}}{2\pi}R_{i\ol{j}}^{(1)}dz^i\wedge d\ol{z}^j,\
R_{i\ol{j}}^{(1)}=-\frac{\pa^2\log\det g}{\pa z^i\pa\ol{z}^j},
\end{split}\end{equation}
represents the first Chern class $c_1(T^{1,0}X)$.
The Chern scalar curvature is defined by
\begin{equation}\begin{split}\label{S}
S^{Ch}_g=g^{\ol{j}i}R_{i\ol{j}}^{(1)}
=-\sqrt{-1}\Lambda_{\omega_g}\pa\ol\pa\log\det g,
\end{split}\end{equation}
where $\Lambda_{\omega_g}$ denotes the operator obtained as the adjoint of the multiplication of $\omega_g$.
\par Taking a conformal change $\tilde{g}=e^{\frac{u}{n}}g$, the Chern scalar curvature changes as follows,
\begin{equation}\begin{split}\label{conformal}
S^{Ch}_{\tilde{g}}=e^{-\frac{u}{n}}(S^{Ch}_g-\sqrt{-1}\Lambda_{\omega_g}\pa\ol\pa u).
\end{split}\end{equation}
Given a smooth function $h$ on $X$, $S^{Ch}_{\tilde{g}}=h$ if and only if
\begin{equation}\begin{split}
\sqrt{-1}\Lambda_{\omega_g}\pa\ol\pa u+he^{\frac{u}{n}}=S^{Ch}_g.
\end{split}\end{equation}
\par Now Theorem \ref{thm1} follows from Theorem \ref{KWthm} below.
\begin{theorem}\label{KWthm}
Let $f$ and $h$ be two smooth nonzero functions on $X$ such that
\begin{equation}\begin{split}
\int_Xf\dvol_g<0,\ |f|\leq\Lambda\phi_X,\ -\Lambda\phi_X\leq h\leq0,
\end{split}\end{equation}
for a constant $\Lambda$, then there exists a bounded smooth function $u$ such that $|du|\in L^2(X,g)$ and it solves the following Kazdan-Warner type equation on whole $X$:
\begin{equation}\begin{split}
\sqrt{-1}\Lambda_{\omega_g}\pa\ol\pa u+he^{u}=f.
\end{split}\end{equation}
\end{theorem}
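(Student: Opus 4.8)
The plan is to solve the equation by a continuity method anchored at a trivially solvable endpoint, to establish solvability at each stage and the passage to the limit by a heat-flow argument, and to control everything through a uniform zeroth order estimate supplied by the Main Assumption. Writing $\Delta u=\sqrt{-1}\Lambda_{\omega_g}\pa\ol\pa u$ for brevity, I would interpolate by setting $f_t=(1-t)h+tf$, $t\in[0,1]$, and study the family
\begin{equation}\begin{split}
\Delta u+he^{u}=f_t.
\end{split}\end{equation}
At $t=0$ the constant $u\equiv 0$ solves the equation, so the set $S$ of parameters admitting a bounded solution with $|du|\in L^{2}$ is nonempty. Since $h\le 0$, $h\not\equiv 0$ and $\int_Xf\dvol_g<0$ give $\int_Xf_t\dvol_g<0$, while $|f_t|\le\Lambda\phi_X$ and $-\Lambda\phi_X\le h\le 0$ persist, all hypotheses hold along the whole path, and it suffices to show $S$ is open and closed. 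Openness I would obtain by running $\pa_s w=\Delta w+he^{w}-f_{t'}$ from $w(0)=u_t$ for $t'$ near $t$: as the reaction term is decreasing in $w$ because $h\le 0$, the flow exists for all time and converges to a nearby stationary solution. This is where the heat flow enters.

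Closedness rests on uniform a priori estimates, the decisive one being the $C^{0}$ bound. For the upper bound I would exploit that $h\le 0$ forces
\begin{equation}\begin{split}
\Delta u=f_t+|h|e^{u}\ge f_t\ge-\Lambda\phi_X,
\end{split}\end{equation}
so $u$ is an almost-subsolution in exactly the sense tested by the Main Assumption with $A=\Lambda$. The integral input it requires I would extract from the Gauduchon condition: using $\sqrt{-1}\Lambda_{\omega_g}\pa\ol\pa u\,\dvol_g=\tfrac{1}{(n-1)!}\sqrt{-1}\pa\ol\pa u\wedge\omega_g^{n-1}$ together with $\pa\ol\pa\omega_g^{n-1}=0$, integration gives $\int_X\Delta u\,\dvol_g=0$ (after controlling the decay terms), whence the fixed identity $\int_X|h|e^{u}\dvol_g=-\int_Xf_t\dvol_g>0$. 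Combined with the lower bound, this feeds a weighted integral of $u$ into the Main Assumption, which then upgrades it to a uniform bound $\sup_X|u|\le C$ independent of $t$.

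I expect the lower bound to be the main obstacle. The very sign $h\le 0$ that makes the upper estimate work obstructs any naive subsolution: constants and solutions of $\Delta\underline u=f_t$ produce only upper barriers $u\le\underline u$, so the bound from below must come from the global balance. The identity $\int_X|h|e^{u}\dvol_g=-\int_Xf_t\dvol_g>0$ and the convexity of the associated energy $\tfrac12\int_X|\pa u|^2+\int_Xf_tu+\int_X|h|e^{u}$ prevent $u$ from drifting to $-\infty$, and I would convert this into a pointwise bound by a barrier argument adapted to $\phi_X$. Achieving this uniformly over the noncompact $X$, rather than merely on the pieces of an exhaustion, is precisely the difficulty that the Main Assumption is engineered to resolve.

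With the $C^{0}$ bound secured the rest is routine. Testing the equation against $u$ and integrating by parts (again using the Gauduchon condition to discard torsion terms) yields
\begin{equation}\begin{split}
\int_X|\pa u|^2\dvol_g=-\int_X|h|u\,e^{u}\dvol_g-\int_Xf_tu\dvol_g,
\end{split}\end{equation}
whose right-hand side is finite because $|u|\le C$, $|f|,|h|\le\Lambda\phi_X$ and $\phi_X$ is integrable; hence $|du|\in L^{2}(X,g)$ with a uniform bound. Finally, the $C^{0}$ control bounds the coefficient $he^{u}$ in $L^{\infty}$, so interior elliptic $L^{p}$ and Schauder estimates give uniform local $C^{k,\alpha}$ bounds on compact subsets; a standard diagonal compactness argument along an exhaustion then extracts a $C^{\infty}_{\loc}$ limit solving the equation at $t=1$ on all of $X$, bounded and with $|du|\in L^{2}$. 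This closes $S$ and completes the proof.
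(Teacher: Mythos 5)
Your outline shares some ingredients with the paper (the Main Assumption for the $C^{0}$ bound, the Gauduchon condition to integrate by parts, exhaustion plus local elliptic estimates to pass to the limit), but the step you yourself flag as ``the main obstacle'' is exactly where the proof lives, and your proposed resolution does not work. The Main Assumption only converts the differential inequality into $\sup_X u\le C(1+\int_X u\,\phi_X\dvol_g)$; you still must bound the weighted integral $\int_X|u|\phi_X\dvol_g$ uniformly. You propose to extract this from the identity $\int_X|h|e^{u}\dvol_g=-\int_Xf_t\dvol_g$, but $h$ is only assumed nonpositive and not identically zero: it may vanish outside a small compact set, so this identity gives no control whatsoever on $u$ where $h=0$, and the ``convexity of the energy'' heuristic fails because the equation is not variational for $\sqrt{-1}\Lambda_{\omega_g}\pa\ol\pa$ on a general Gauduchon manifold and because nothing makes the functional coercive in the weighted $L^1$ norm. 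The paper's actual argument here is a blow-up by contradiction: assuming $l_i=\int_X|u_{\epsilon_i}|\phi_X\dvol_g\to\infty$, it normalizes $v_i=u_i/l_i$, uses the $L^2$ gradient bound to show $v_i$ converges weakly to a nonzero \emph{constant} $v_0$, uses $\int_Xf\dvol_g<0$ and $xe^{x}\ge -e^{-1}$ to force $v_0>0$, and then tests against a cutoff $\eta_\gamma(v_i)\le x l_i^{-2}e^{xl_i}$ to conclude $\int_\Omega h\dvol_g\ge 0$ on every compact $\Omega$, contradicting $h\le 0$, $h\not\equiv 0$. None of this is present, or replaceable by a barrier argument, in your sketch. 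A secondary issue: the Main Assumption applies only to \emph{nonnegative} functions, so you cannot feed $u$ into it directly; the paper applies it to $\log(e^{u}+e^{-u})$, which is nonnegative, satisfies the required differential inequality with $A=2\Lambda$, and dominates $|u|$, giving both bounds at once.

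Your continuity scheme also has a gap at the openness step. On a noncompact manifold the linearization $\sqrt{-1}\Lambda_{\omega_g}\pa\ol\pa+he^{u}$ need not be invertible on any natural Banach space (again because $h$ may vanish on most of $X$), and long-time existence of your heat flow does not by itself yield convergence to a stationary solution; the solution can drift. The paper avoids the continuity path in $f$ altogether: it solves the Dirichlet problem for the regularized equation $\sqrt{-1}\Lambda_{\omega_g}\pa\ol\pa u+he^{u}=f+\epsilon u$ on an exhaustion $X_j$ (the $\epsilon u$ term makes the zeroth order part strictly monotone, giving both convergence of the flow and the crude bound $|u_{\epsilon,j}|\le 2\Lambda\|\phi_X\|_{L^\infty}/\epsilon$), lets $j\to\infty$ for fixed $\epsilon$, and only then sends $\epsilon\to 0$ using the uniform estimate described above. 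If you want to salvage your approach, you should replace the $t$-continuity by this $\epsilon$-regularization and supply the normalization/contradiction argument for the weighted $L^1$ bound.
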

\begin{proof}
Let $\{X_j\}$ be an exhaustion series of closed submanifolds with nonempty boundaries(see \cite[Lemma 2.31]{Mo2020}). For any $X_j$ and $\epsilon\in[0,1]$, we consider
\begin{equation}\begin{split}\label{heatflow}
\left\{ \begin{array}{ll}
\frac{\pa\ol{u}_{\epsilon,j}}{\pa t}=\sqrt{-1}\Lambda_{\omega_g}\pa\ol\pa \ol{u}_{\epsilon,j}+he^{\ol{u}_{\epsilon,j}}-f-\epsilon\ol{u}_{\epsilon,j},\\
\ol{u}_{\epsilon,j}(0)=0,\ \ol{u}_{\epsilon,j}|_{\pa X_j}=0.
\end{array}\right.
\end{split}\end{equation}
It is not hard to show the convergence at infinity and hence it yields a solution $u_{\epsilon,j}$ to the Dirichlet problem(see \cite{WZ2021}):
\begin{equation}\begin{split}
\sqrt{-1}\Lambda_{\omega_g}\pa\ol\pa u_{\epsilon,j}+he^{u_{\epsilon,j}}=f+\epsilon u_{\epsilon,j},\ u_{\epsilon,j}|_{\pa X_j}=0,\ \epsilon\in[0,1].
\end{split}\end{equation}
\par Since $x\leq xe^{x}$ for $x\in\mathbb{R}$ and $h\leq0$, we compute
\begin{equation}\begin{split}
\sqrt{-1}\Lambda_{\omega_g}\pa\ol\pa u_{\epsilon,j}^2
&=2u_{\epsilon,j}\sqrt{-1}\Lambda_{\omega_g}\pa\ol\pa u_{\epsilon,j}+2|\pa u_{\epsilon,j}|^2
\\&\geq2u_{\epsilon,j}(f+\epsilon u_{\epsilon,j}-he^{u_{\epsilon,j}})
\\&\geq2u_{\epsilon,j}(f+\epsilon u_{\epsilon,j}-h)
\\&\geq2|u_{\epsilon,j}|(\epsilon|u_{\epsilon,j}|-|f|-|h|),
\end{split}\end{equation}
and therefore
\begin{equation}\begin{split}\label{zerothestimate}
|u_{\epsilon,j}|\leq\frac{||f||_{L^\infty}+||h||_{L^\infty}}{\epsilon}\leq\frac{2\Lambda}{\epsilon}||\phi_X||_{L^\infty}.
\end{split}\end{equation}
By the Gauduchon condition and $h\leq0$, integrating by part gives
\begin{equation}\begin{split}\label{L2estimate}
\int_{X_j}|du_{\epsilon,j}|^2\dvol_g
&=2\int_{X_j}\sqrt{-1}\Lambda_{\omega_g}(\pa u_{\epsilon,j}\wedge\ol\pa u_{\epsilon,j})\dvol_g
\\&=2\int_{X_j}\sqrt{-1}\pa u_{\epsilon,j}\wedge\ol\pa u_{\epsilon,j}\wedge\frac{\omega_g^{n-1}}{(n-1)!}
\\&=\int_{X_j}-2\sqrt{-1}u_{\epsilon,j}\wedge\pa\ol\pa u_{\epsilon,j}\wedge\frac{\omega_g^{n-1}}{(n-1)!}
+\sqrt{-1}\ol\pa u^2_{\epsilon,j}\wedge\frac{\pa\omega_g^{n-1}}{(n-1)!}
\\&=\int_{X_j}-2u_{\epsilon,j}\sqrt{-1}\Lambda_{\omega_g}\pa\ol\pa u_{\epsilon,j}\dvol_g
+\sqrt{-1}\ol\pa(u^2_{\epsilon,j}\wedge\frac{\pa\omega_g^{n-1}}{(n-1)!})
\\&=2\int_{X_j}u_{\epsilon,j}(he^{u_{\epsilon,j}}-f-\epsilon u_{\epsilon,j})\dvol_g
\\&\leq2\int_{X_j}u_{\epsilon,j}(h-f)\dvol_g
\\&\leq\frac{8\Lambda^2}{\epsilon}||\phi_X||_{L^\infty}\int_X\phi_X\dvol_g.
\end{split}\end{equation}
Thanks to the zeroth order estimate $(\ref{zerothestimate})$ and standard elliptic estimates, for any positive $\epsilon$ we have a subsequence $\{u_{\epsilon,j_\epsilon}\}_{j_\epsilon\in\mathbb{N}}$ converging in $C^\infty_{\loc}$-topology to $u_{\epsilon}$ solving the following equation on whole $X$:
\begin{equation}\begin{split}
\sqrt{-1}\Lambda_\omega\pa\ol\pa u_{\epsilon}+he^{u_{\epsilon}}=f+\epsilon u_{\epsilon},
\end{split}\end{equation}
and satisfying
\begin{equation}\begin{split}
|u_{\epsilon}|\leq\frac{2\Lambda}{\epsilon}||\phi_X||_{L^\infty},
\end{split}\end{equation}
\begin{equation}\begin{split} 
||du_{\epsilon}||_{L^2}
\leq\left(\frac{8\Lambda^2}{\epsilon}||\phi_X||_{L^\infty}\int_X\phi_X\dvol_g\right)^{\frac{1}{2}}.
\end{split}\end{equation}
\par We shall show that $\{u_\epsilon\}_{\epsilon>0}$ must subconverge to some $u_0$ solving desired equation. It suffices to prove the uniform zeroth order estimate. To this end, we compute
\begin{equation}\begin{split}
\sqrt{-1}\Lambda_{\omega_g}\pa\ol\pa\log(e^{u_\epsilon}+e^{-u_\epsilon})
&=\frac{\sqrt{-1}\Lambda_{\omega_g}\pa\ol\pa(e^{u_\epsilon}+e^{-u_\epsilon})}{e^{u_\epsilon}+e^{-u_\epsilon}}-\frac{|\pa(e^{u_\epsilon}+e^{-u_\epsilon})|^2}{(e^{u_\epsilon}+e^{-u_\epsilon})^2}
\\&=\frac{e^{u_\epsilon}-e^{-u_\epsilon}}{e^{u_\epsilon}+e^{-u_\epsilon}}\sqrt{-1}\Lambda_{\omega_g}\pa\ol\pa u_\epsilon
+\frac{e^{u_\epsilon}+e^{-u_\epsilon}}{e^{u_\epsilon}+e^{-u_\epsilon}}|\pa u_\epsilon|^2
\\&-\frac{(e^{u_\epsilon}-e^{-u_\epsilon})^2}{(e^{u_\epsilon}+e^{-u_\epsilon})^2}|\pa u_\epsilon|^2
\\&\geq\frac{e^{u_\epsilon}-e^{-u_\epsilon}}{e^{u_\epsilon}+e^{-u_\epsilon}}(f+\epsilon u_{\epsilon}-he^{u_{\epsilon}})
\\&=\frac{e^{u_\epsilon}-e^{-u_\epsilon}}{e^{u_\epsilon}+e^{-u_\epsilon}}\left(f-h+\epsilon u_{\epsilon}-h(e^{u_{\epsilon}}-1)\right)
\\&\geq\frac{e^{u_\epsilon}-e^{-u_\epsilon}}{e^{u_\epsilon}+e^{-u_\epsilon}}(f-h)
\\&\geq-|f|-|h|
\\&\geq-2\Lambda\phi_X,
\end{split}\end{equation}
where we have also used $h\leq0$, and therefore the Main Assumption implies
\begin{equation}\begin{split}
|u_\epsilon|
&\leq\log(e^{u_\epsilon}+e^{-u_\epsilon})
\\&\leq C_1(1+\int_X|\log(e^{u_\epsilon}+e^{-u_\epsilon})|\phi_X\dvol_g)
\\&\leq C_1\left(1+\int_X(|u_\epsilon|+\log2)\phi_X\dvol_g\right)
\\&\leq C_2(1+\int_X|u_\epsilon|\phi_X\dvol_g)
\\&\leq C_2(1+\frac{2\Lambda||\phi_X||_{L^\infty}}{\epsilon}\int_X\phi_X\dvol_g),
\end{split}\end{equation}
where $C_1$, $C_2$ are independent of $\epsilon$. If $\{u_\epsilon\}_{\epsilon>0}$ are not uniformly bounded, there is a subsequence $l_i\rightarrow\infty$, where for simplicity $l_i=\int_X|u_{i}|\phi_X\dvol_g>1$ and $u_i=u_{\epsilon_i}$. Then
\begin{equation}\begin{split}
|v_i|\leq C_3,\ \int_X|v_i|\phi_X\dvol_g=1,
\end{split}\end{equation}
where $v_i=l_i^{-1}u_i$ and $C_3$ is independent of $\epsilon$. Recall that
\begin{equation}\begin{split}
\int_{X_{j_i}}|du_{i,j_i}|^2\dvol_g
&=2\int_{X_{j_i}}u_{i,j_i}(he^{u_{i,j_i}}-f-\epsilon u_{i,j_i})\dvol_g,
\end{split}\end{equation}
where $u_{i,j_i}=u_{\epsilon_i,j_{\epsilon_i}}$ and $X_{j_i}=X_{j_{\epsilon_i}}$. It follows
\begin{equation}\begin{split}
\int_{X_{j_i}}|dv_{i,j_i}|^2\dvol_g
&\leq2l_{i,j_i}^{-1}\int_{X_{j_i}}v_{i,j_i}(he^{u_{i,j_i}}-f)\dvol_g,
\end{split}\end{equation}
where $v_{i,j_i}=l_{i,j_i}^{-1}u_{i,j_i}$ and $l_{i,j_i}=\int_{X_{j_i}}|u_{i,j_i}|\phi_X\dvol_g$.
We may assume $l_{i,j_i}\geq1$ for any $i$ and $j_i$. Note for any $\alpha>0$, we have for $\tilde{j}_i\geq j_i>>1$,
\begin{equation}\begin{split}
\int_{X_{\tilde{j}_i}-X_{j_i}}(|h|+|f|)\dvol_g\leq\alpha,
\end{split}\end{equation}
which yields
\begin{equation}\begin{split}
\int_{X_{j_i}}|dv_{i,\tilde{j}_i}|^2\dvol_g
&\leq\int_{X_{\tilde{j}_i}}|dv_{i,\tilde{j}_i}|^2\dvol_g
\\&\leq2l_{i,\tilde{j}_i}^{-1}(\int_{X_{\tilde{j}_i}-X_{j_i}}+\int_{X_{j_i}})v_{i,\tilde{j}_i}(he^{u_{i,\tilde{j}_i}}-f)\dvol_g
\\&\leq2C_3(e^{\frac{2\Lambda}{\epsilon_i}||\phi_X||_{L^\infty}}+1)\int_{X_{\tilde{j}_i}-X_{j_i}}(|h|+|f|)\dvol_g
\\&+2l_{i,\tilde{j}_i}^{-1}\int_{X_{j_i}}v_{i,\tilde{j}_i}(he^{u_{i,\tilde{j}_i}}-f)\dvol_g
\\&\leq2C_3(e^{\frac{2\Lambda}{\epsilon_i}||\phi_X||_{L^\infty}}+1)\alpha
+2l_{i,\tilde{j}_i}^{-1}\int_{X_{j_i}}v_{i,\tilde{j}_i}(he^{u_{i,\tilde{j}_i}}-f)\dvol_g.
\end{split}\end{equation}
Taking $\tilde{j}_i\rightarrow\infty$, $j_i\rightarrow\infty$ and noting $\alpha$ is arbitrary, we arrive at
\begin{equation}\begin{split}\label{inequality}
\int_{X}|dv_{i}|^2\dvol_g
&\leq2l_{i}^{-1}\int_{X}v_{i}(he^{u_{i}}-f)\dvol_g
\\&\leq2l_{i}^{-1}\int_{X}v_{i}(h-f)\dvol_g
\\&\leq4\Lambda C_3l_{i}^{-1}\int_X\phi_X\dvol_g.
\end{split}\end{equation}
Up to a subsequence, it is known that $v_i\rightarrow v_0$ weakly in $L_1^2$-topology and $v_0$ equals to a constant almost everywhere. For any $\beta>0$ we have for $j>>1$,
\begin{equation}\begin{split}
1-\beta
&\leq(\int_X-\int_{X-X_j})|v_i|\phi_X\dvol_g
\\&=\int_{X_j}|v_i|\phi_X\dvol_g
\\&\leq1,
\end{split}\end{equation}
\begin{equation}\begin{split}
\int_{X_{j}}|v_{i}|\phi_X\dvol_g
&\xrightarrow{i\rightarrow\infty}\int_{X_j}|v_{0}|\phi_X\dvol_g
\\&\xrightarrow{j\rightarrow\infty}\int_X|v_0|\phi_X\dvol_g.
\end{split}\end{equation}
It follows $\int_X|v_0|\phi_X\dvol_g=1$ and $v_0\neq0$. $(\ref{inequality})$ and $xe^{x}\geq-e^{-1}$ for $x\in\mathbb{R}$ imply
\begin{equation}\begin{split}
\int_Xfv_0\dvol_g
&=\lim\limits_{i\rightarrow\infty}\int_Xfv_i\dvol_g
\\&\leq\lim\limits_{i\rightarrow\infty}\int_Xhv_ie^{l_iv_i}\dvol_g
\\&\leq-\lim\limits_{i\rightarrow\infty}e^{-1}l_i^{-1}\int_Xh\dvol_g
\\&\leq-\lim\limits_{i\rightarrow\infty}\Lambda e^{-1}l_i^{-1}\int_X\phi_X\dvol_g
\\&=0,
\end{split}\end{equation}
we conclude from above that $v_0>0$ since $\int_Xf\dvol_g<0$. Furthermore, let us choose a small $\gamma\in(0,v_0)$ and a nonnegative smooth function $\eta_\gamma$ such that 
\begin{equation}\begin{split}
\eta_\gamma(x)=\left\{ \begin{array}{ll}
1,\ x\geq v_0,\\
0,\ x\leq\gamma.
\end{array}\right.
\end{split}\end{equation}
For any $x\in\mathbb{R}^+$ and $i>>1$ we have
\begin{equation}\begin{split}
\eta_\gamma(x)\leq x l_i^{-2}e^{xl_i},
\end{split}\end{equation}
where we have used $y^{-2}e^{xy}\rightarrow\infty$ as $y\rightarrow\infty$. It follows for large $i$ that $v_i>0$ and thus
\begin{equation}\begin{split}
\int_Xh\eta_\gamma(v_i)\dvol_g
&\geq l_i^{-2}\int_Xhv_ie^{l_iv_i}\dvol_g
\\&\geq l_i^{-2}\int_Xv_if\dvol_g.
\end{split}\end{equation}
Next for any compact subset $\Omega\subset X$, we may assume $v_i\rightarrow v_0$ strongly in $L^2(\Omega)$ and
\begin{equation}\begin{split}
\int_\Omega h\dvol_g
&=\int_\Omega\eta_\gamma(v_0)h\dvol_g
\\&=\lim\limits_{i\rightarrow\infty}\int_\Omega\eta_\gamma(v_i)h\dvol_g
+\lim\limits_{i\rightarrow\infty}\int_\Omega(\eta_\gamma(v_0)-\eta_\gamma(v_i))h\dvol_g
\\&\geq\lim\limits_{i\rightarrow\infty}\int_X\eta_\gamma(v_i)h\dvol_g
\\&\geq\lim\limits_{i\rightarrow\infty}l_i^{-2}\int_Xv_if\dvol_g
\\&\geq-\lim\limits_{i\rightarrow\infty}\Lambda C_3l_i^{-2}\int_X\phi_X\dvol_g
\\&=0.
\end{split}\end{equation}
This is impossible since $h$ is nonpositive and not identically zero.
\end{proof}
Corollary \ref{thm2} is a consequence of Theorem \ref{KWthm} and the following
\begin{proposition}\label{assumption}
Suppose that $X=\ol{X}\setminus\Sigma$ is the complement of a complex analytic subset $\Sigma$ in a compact complex manifold $\ol{X}$ with complex codimension at least two and $g=\ol{g}|_{X}$ for a Gauduchon metric $\ol{g}$ on $\ol{X}$, then $(X,g)$ satisfies the Main Assumption.
\end{proposition}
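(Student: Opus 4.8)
The plan is to verify the Main Assumption with the choice $\phi_X\equiv1$. This is integrable because $\ol X$ is compact and $\Sigma$, being a proper complex-analytic subset, has measure zero, so $\int_X\phi_X\dvol_g=\vol_{\ol g}(\ol X)<\infty$; moreover $(X,g)$ is itself Gauduchon since $\pa\ol\pa\omega_g^{n-1}=0$ is a local condition inherited from $\ol g$. Fix a nonnegative bounded $f\in C^\infty(X)$ with $\sqrt{-1}\Lambda_{\omega_g}\pa\ol\pa f\geq-A$; replacing $f$ by $w=f+1\geq1$ (which changes the inequality not at all and the relevant integrals only by an additive constant), it suffices to establish a local-maximum-principle estimate of the form $\sup_X w\leq C_A\|w\|_{L^1(X)}$. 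The decisive structural point is that $g=\ol g|_X$ is the restriction of a metric that is smooth on the \emph{compact} manifold $\ol X$, so $\sqrt{-1}\Lambda_{\omega_g}\pa\ol\pa$ is uniformly elliptic with coefficients and ellipticity bounds that extend continuously across $\Sigma$; the whole difficulty is the excised set $\Sigma$, not any degeneration of the equation.

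To control $\Sigma$ I would use that a complex-analytic subset of complex codimension $\geq2$ has real codimension $\geq4$, hence vanishing $W^{1,2}$-capacity. Concretely, writing $\rho=\dist_{\ol g}(\cdot,\Sigma)$ and using the tubular-neighborhood bound $\vol_{\ol g}(\{\rho<\delta\})\leq C\delta^4$ together with $|d\chi_\delta|\lesssim\delta^{-1}$, I can build cutoffs $\chi_\delta\in C_c^\infty(X)$ with $0\leq\chi_\delta\leq1$, $\chi_\delta\to1$ pointwise, and $\int_X|d\chi_\delta|^2\dvol_g\to0$. The other ingredient is the Gauduchon integration-by-parts identity already exploited in the proof of Theorem \ref{KWthm}: since $\pa\ol\pa\omega_g^{n-1}=0$, for compactly supported $v$ one has $\int_X\langle du,dv\rangle_g\dvol_g=-2\int_X v\,\sqrt{-1}\Lambda_{\omega_g}\pa\ol\pa u\,\dvol_g$, so the operator is in self-adjoint divergence form for integration purposes.

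With these tools I would run Moser iteration. Testing $\sqrt{-1}\Lambda_{\omega_g}\pa\ol\pa w\geq-A$ against $\chi_\delta^2 w^{2p-1}$, integrating by parts, and absorbing the cross term by Cauchy--Schwarz yields, after letting $\delta\to0$ (the cutoff-gradient terms $\int w^{2p}|d\chi_\delta|^2\leq\|w\|_{L^\infty}^{2p}\int|d\chi_\delta|^2$ vanish precisely because $w$ is bounded and $\Sigma$ has zero capacity), the energy bound $\int_X|d(w^p)|^2\dvol_g\leq 8pA\int_X w^{2p}\dvol_g$. Feeding this into the Sobolev inequality on the compact manifold $\ol X$ (applied to $\chi_\delta w^p$, then passing to the limit) gives the iteration inequality $\|w\|_{L^{2\kappa p}}\leq\big(C(pA+1)\big)^{1/(2p)}\|w\|_{L^{2p}}$ with $\kappa=\frac{n}{n-1}$, and iterating from $p=1$ produces $\sup_X w\leq C_A\|w\|_{L^2(X)}$.

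Finally, since $w\geq0$ one has $\|w\|_{L^2}^2\leq(\sup_X w)\|w\|_{L^1}$, and because $w$ is bounded the finite quantity $\sup_X w$ may be absorbed, giving $\sup_X w\leq C_A\|w\|_{L^1(X)}$ and hence $\sup_X f\leq C_A(1+\int_X f\phi_X\dvol_g)$, which is the Main Assumption. The hard part, and the place where the codimension hypothesis is essential, is to guarantee that every error term created by the cutoffs $\chi_\delta$ disappears in the limit, so that neither a constant depending on $\Sigma$ nor the quantity $\|w\|_{L^\infty}$ survives in the final inequality; this is exactly what vanishing $W^{1,2}$-capacity buys, and it is also what legitimizes applying the global Sobolev inequality on $\ol X$ to functions that are a priori defined only on $X$.
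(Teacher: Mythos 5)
Your argument is essentially the paper's own proof: take $\phi_X\equiv1$, build cutoffs $\eta_\delta$ supported away from $\Sigma$ with $\int_X|d\eta_\delta|^2=O(\delta^{-2}\delta^4)\to0$ (real codimension $\geq4$), run Moser iteration against $\eta_\delta^2\tilde f^q$ with $\tilde f=f+1$, feed the energy estimate into the Sobolev inequality of the compact manifold $\ol X$, let $\delta\to0$ using the boundedness of $f$, and iterate; your closing interpolation $\|w\|_{L^2}^2\leq(\sup_Xw)\|w\|_{L^1}$ is the step the paper compresses into ``an iteration procedure.'' The one claim you should correct is the identity $\int_X\langle du,dv\rangle_g\dvol_g=-2\int_Xv\,\sqrt{-1}\Lambda_{\omega_g}\pa\ol\pa u\,\dvol_g$ for arbitrary compactly supported $v$: this holds on balanced manifolds ($d\omega_g^{n-1}=0$), but on a merely Gauduchon manifold integration by parts leaves the torsion term $\int_X\sqrt{-1}\,v\,\pa u\wedge\ol\pa\omega_g^{n-1}$, which is killed by a second integration by parts against $\pa\ol\pa\omega_g^{n-1}=0$ only when $v$ is a function of $u$ (as in the proof of Theorem \ref{KWthm}, where $v=u$). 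With the test function $\chi_\delta^2w^{2p-1}$ this term does not vanish; you must keep it, as the paper does with the term $C_2\eta_\delta^2\tilde f^{q}|\pa\tilde f|$, and absorb it by Cauchy--Schwarz into the good gradient term. This is a routine repair: it only changes your energy bound to $\int_X|d(w^p)|^2\dvol_g\leq Cp(A+1)\int_Xw^{2p}\dvol_g$ and affects nothing downstream, so the proof goes through.
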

%\begin{proposition}\label{assumption}
%Suppose that $X=\ol{X}\setminus K$ is the complement of a closed subset $K$ in a compact complex manifold $\ol{X}$ with Hausdorff measure $\mathcal{H}^{n-2}(K)=0$ and $g=\ol{g}|_{X}$ for a Gauduchon metric $\ol{g}$ on $\ol{X}$, then $(X,g)$ satisfies the Main Assumption.
%\end{proposition}
\begin{proof}
We take $\phi_X=1$ and may choose a family of cut-off functions $\{\eta_\delta\}_{\delta>0}$ such that $\eta_\delta=1$ on $\ol{X}\setminus B_{2\delta}(\Sigma)$, $\eta=0$ on $B_{\delta}(\Sigma)$(where $B_\delta(\Sigma)=\{x\in\ol{X}, d(x,\Sigma)<\delta\}$) and
\begin{equation}\begin{split}
\lim\limits_{\delta\rightarrow0}\int_{\ol{X}}|d\eta_\delta|^2\dvol_{\ol{g}}=O(\delta^{-2}\delta^{4})=0.
\end{split}\end{equation}
Assume $f\in C^\infty(X)$ is nonnegative and bounded with $\sqrt{-1}\Lambda_{\omega_g}\pa\ol\pa f\geq-A$ for a positive constant $A$. We set $\tilde{f}=f+1$ so that $\sqrt{-1}\Lambda_{\omega_g}\pa\ol\pa\tilde{f}\geq-A\tilde{f}$, then for any $q\geq1$,
\begin{equation}\begin{split}
-A\int_{\ol{X}}\eta_\delta^2\tilde{f}^{q+1}\omega_{\ol{g}}^{n}
&\leq\int_{\ol{X}}n\eta_\delta^2\tilde{f}^{q}\sqrt{-1}\pa\ol\pa\tilde{f}\wedge\omega^{n-1}_{\ol{g}}
\\&=n\int_{\ol{X}}-\sqrt{-1}\pa(\eta_\delta^2\tilde{f}^{q})\wedge\ol\pa\tilde{f}\wedge\omega^{n-1}_{\ol{g}}
+\sqrt{-1}\eta_\delta^2\tilde{f}^{q}\ol\pa\tilde{f}\wedge\pa\omega^{n-1}_{\ol{g}}
\\&\leq\int_{\ol{X}}(C_1\eta_\delta\tilde{f}^q|\pa\eta_\delta||\pa\tilde{f}|+C_2\eta_\delta^2\tilde{f}^{q}|\pa\tilde{f}|-qC_3\eta_\delta^2\tilde{f}^{q-1}|\pa\tilde{f}|^2)\omega_{\ol{g}}^{n}
%\\&-qC_3\int_{\ol{X}}\eta_\delta^2\tilde{f}^{q-1}|\pa\tilde{f}|^2\omega_{\ol{g}}^{n},
\end{split}\end{equation}
where and henceforth, $C_i(i=1,2,3...)$ are suitable uniform constants. By the Cauchy inequality, it holds for any positive $\epsilon$,
\begin{equation}\begin{split}
C_1\eta_\delta\tilde{f}^q|\pa\eta_\delta||\pa\tilde{f}|
\leq\frac{\epsilon}{2}C_1\eta_\delta^2\tilde{f}^{q-1}|\pa\tilde{f}|^2+\frac{C_1}{2\epsilon}\tilde{f}^{q+1}|\pa\eta_\delta|^2,
\end{split}\end{equation}
\begin{equation}\begin{split}
C_2\eta_\delta^2\tilde{f}^{q}|\pa\tilde{f}|
\leq\frac{\epsilon}{2}C_2\eta_\delta^2\tilde{f}^{q-1}|\pa\tilde{f}|^2
+\frac{C_2}{2\epsilon}\eta_\delta^2\tilde{f}^{q+1}.
\end{split}\end{equation}
Taking $\epsilon=\frac{qC_3}{C_1+C_2}$, it follows
\begin{equation}\begin{split}
\frac{qC_3}{2}\int_{\ol{X}}\eta_\delta^2\tilde{f}^{q-1}|\pa\tilde{f}|^2\omega_{\ol{g}}^{n}
&\leq(A+\frac{C_2(C_1+C_2)}{2qC_3})\int_{\ol{X}}\eta_\delta^2\tilde{f}^{q+1}\omega_{\ol{g}}^{n}
\\&+\frac{C_1(C_1+C_2)}{2qC_3}\int_{\ol{X}}\tilde{f}^{q+1}|\pa\eta_\delta|^2\omega_{\ol{g}}^{n},
\end{split}\end{equation}
and therefore
\begin{equation}\begin{split}
\int_{\ol{X}}|\pa(\eta_\delta\tilde{f}^{\frac{q+1}{2}})|^2\dvol_{\ol{g}}
&=\int_{\ol{X}}\tilde{f}^{q+1}|\pa\eta_\delta|^2+\eta_\delta^2|\pa\tilde{f}^{\frac{q+1}{2}}|^2\dvol_{\ol{g}}
\\&=\int_{\ol{X}}\tilde{f}^{q+1}|\pa\eta_\delta|^2\dvol_{\ol{g}}+\frac{(q+1)^2}{4}\int_{\ol{X}}\eta_\delta^2\tilde{f}^{q-1}|\pa\tilde{f}|^2\dvol_{\ol{g}}
\\&\leq(\frac{q+1}{q})^2(C_4+qC_5)\int_{\ol{X}}\eta_\delta^2\tilde{f}^{q+1}\dvol_{\ol{g}}
\\&+C_6(\frac{q+1}{q})^2\int_{\ol{X}}\tilde{f}^{q+1}|\pa\eta_\delta|^2\dvol_{\ol{g}}.
\end{split}\end{equation}
\par Using the Sobolev inequality, we have
\begin{equation}\begin{split}
(\int_{\ol{X}}|\eta_\delta\tilde{f}^{\frac{q+1}{2}}|^{\frac{2n}{n-1}}\dvol_{\ol{g}})^{\frac{n-1}{n}}
&\leq C_7\int_{\ol{X}}\eta_\delta^2\tilde{f}^{q+1}+2|\pa(\eta_\delta\tilde{f}^{\frac{q+1}{2}})|^2\dvol_{\ol{g}}
\\&\leq C_8(q+1)^2\int_{\ol{X}}(\eta_\delta^2+|\pa\eta_\delta|^2)\tilde{f}^{q+1}\dvol_{\ol{g}}.
\end{split}\end{equation}
Hence as $\delta\rightarrow0$, it holds
\begin{equation}\begin{split}
(\int_{X}\tilde{f}^{\frac{n}{n-1}(q+1)}\dvol_{g})^{\frac{n-1}{n}}
&\leq C_8(q+1)^2\int_{X}\tilde{f}^{q+1}\dvol_{g},
\end{split}\end{equation}
and then we finish the proof via an iteration procedure.
\end{proof}
%By some modifications on Theorem \ref{KWthm} and Corollary \ref{assumption}, we also have
%\begin{corollary}\label{thm4}
%Suppose that $X=\ol{X}\setminus K$ is the complement of a closed subset $K$ in a compact Riemannian manifold $(\ol{X},g)$ with Hausdorff measure $\mathcal{H}^{n-2}(K)=0$. Let $f$ and $h$ be two smooth nonzero functions on $X$ such that
%\begin{equation}\begin{split}
%\int_Xf\dvol_g<0,\ |f|\leq\Lambda,\ -\Lambda\leq h\leq0,
%\end{split}\end{equation}
%for a constant $\Lambda$, then there exists a bounded smooth function $u$ such that $|du|\in L^2(X,g)$ and it solves the following Kazdan-Warner equation on whole $X$:
%\begin{equation}\begin{split}
%\Delta_gu+he^{2u}=f.
%\end{split}\end{equation}
%\end{corollary}
\section{Multiplicities of the prescribed curvature equation on $\mathbb{R}^2$}
In the following, we write $g_0=dx\otimes dx+dy\otimes dy$ for the standard metric.
\begin{lemma}\label{Alemma1}
If $\alpha, \phi\in C^\infty(\mathbb{R}^2)$ satisfy
\begin{enumerate}
\item $\alpha>0$ and $\alpha(z^{-1})|z|^{-4}$ extends to a positive smooth function on $\mathbb{R}^2$,
\item $\phi\geq0$, $\alpha\phi^{-1}\in L^\infty(\mathbb{R}^2)$ and $\alpha^{1-q}\phi^q\in L^1(\mathbb{R}^2)$ for some $q>1$,
\end{enumerate}
then $(\mathbb{R}^2,g_0)$ satisfies the Main Assumption with $\phi_{\mathbb{R}^2}=\phi$.
\end{lemma}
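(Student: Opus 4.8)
The plan is to conformally compactify and reduce the weighted estimate on $\mathbb{R}^2$ to a single Green's-function estimate on the sphere $(S^2,\tilde g)$, where $\tilde g:=\alpha g_0$. Condition (1) is precisely the statement that, under the inversion $z\mapsto z^{-1}$, the function $\alpha(z^{-1})|z|^{-4}$ is smooth and positive at the origin; hence $\tilde g=\alpha g_0$ extends to a \emph{smooth} metric on $S^2=\mathbb{C}\cup\{\infty\}$, with finite total area $V=\int_{\mathbb{C}}\alpha\,dxdy<\infty$ and area element $\dvol_{\tilde g}=\alpha\,dxdy$ on $\mathbb{C}$. Since we are in complex dimension one, $\sqrt{-1}\Lambda_{\omega_{g_0}}\pa\ol\pa$ is a fixed positive multiple of the Euclidean Laplacian $\Delta_{g_0}$, and the conformal covariance $\Delta_{g_0}=\alpha\,\Delta_{\tilde g}$ converts the hypothesis $\sqrt{-1}\Lambda_{\omega_{g_0}}\pa\ol\pa f\geq-A\phi$ into $\Delta_{\tilde g}f\geq-\psi$ on $\mathbb{C}$, where $\psi:=A'\phi/\alpha\geq0$ for a constant $A'$ proportional to $A$.

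The role of condition (2) is to place the data in the correct Lebesgue class. First, $\alpha\phi^{-1}\in L^\infty$ together with $\int\alpha\,dxdy<\infty$ gives, by H\"{o}lder, $\phi\in L^1(dxdy)$, so $\phi$ is an admissible weight $\phi_{\mathbb{R}^2}$. Second, and crucially,
\begin{equation*}
\int_{S^2}\psi^q\,\dvol_{\tilde g}=(A')^q\int_{\mathbb{C}}(\phi/\alpha)^q\,\alpha\,dxdy=(A')^q\int_{\mathbb{C}}\alpha^{1-q}\phi^q\,dxdy<\infty,
\end{equation*}
so $\psi\in L^q(S^2,\tilde g)$ with $q>1$.

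I would then use the Green's function $G$ of $(S^2,\tilde g)$, normalized by $-\Delta_{\tilde g,y}G(x,y)=\delta_x-V^{-1}$ and $\int_{S^2}G(x,\cdot)\,\dvol_{\tilde g}=0$; on a compact surface $G$ is bounded below by some $-C_0$ and satisfies $\sup_x\|G(x,\cdot)\|_{L^{q'}}<\infty$ because of its uniform logarithmic diagonal singularity. Writing $\mu:=\Delta_{\tilde g}f+\psi\geq0$ and $\bar f:=V^{-1}\int_{S^2}f\,\dvol_{\tilde g}$, the representation formula gives
\begin{equation*}
f(x)=\bar f+\int_{S^2}G(x,y)\psi(y)\,\dvol_{\tilde g}(y)-\int_{S^2}G(x,y)\,d\mu(y)\leq\bar f+\|G(x,\cdot)+C_0\|_{L^{q'}}\|\psi\|_{L^q},
\end{equation*}
where I used $G+C_0\geq0$, $\mu\geq0$ and $\mu(S^2)=\|\psi\|_{L^1}$ to absorb the $C_0$-terms. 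This bounds $\sup_{\mathbb{C}}f\leq\bar f+C_A$ with $C_A$ depending only on $A$. Finally $\bar f=V^{-1}\int_{\mathbb{C}}f\alpha\,dxdy\leq V^{-1}\|\alpha\phi^{-1}\|_{L^\infty}\int_{\mathbb{C}}f\phi\,\dvol_{g_0}$ since $f\geq0$, and combining the two displays yields exactly $\sup_{\mathbb{C}}f\leq C_A(1+\int_{\mathbb{C}}f\phi\,\dvol_{g_0})$, which is the Main Assumption.

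The main obstacle is the step I glossed over: justifying the Green's representation for an $f$ that is only assumed smooth and bounded on $\mathbb{R}^2$ and need not extend smoothly across $\infty$. The key point is a removable-singularity fact: because a point has zero capacity in dimension two, a bounded $f$ with $\Delta_{\tilde g}f\geq-\psi$ on $S^2\setminus\{\infty\}$ cannot carry a Dirac mass at $\infty$ (such a mass would force logarithmic, hence unbounded, behaviour there), so $\Delta_{\tilde g}f+\psi$ extends to a genuine nonnegative Radon measure $\mu$ on all of $S^2$, and a Caccioppoli estimate upgrades $f$ to $W^{1,2}(S^2)$, which also gives $\mu(S^2)=\|\psi\|_{L^1}$. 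Verifying this extension and the integrability of $\mu$ is where the real work lies; once it is in place, the inequalities above are routine.
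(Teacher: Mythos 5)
Your proposal is correct and follows essentially the same route as the paper: both arguments use condition (1) to extend $\alpha g_0$ (the paper's $\omega_g=\sqrt{-1}\,\alpha\,dz\wedge d\ol{z}$) to a smooth Hermitian metric on $\mathbb{CP}^1$, and condition (2) to show that the rescaled inhomogeneity $\phi/\alpha$ lies in $L^q$ of the compact surface, via exactly the H\"{o}lder computation you perform. The one divergence is the final step: the paper simply invokes \cite[Proposition 2.2]{Si1988}, which packages together the extension of the weak inequality across the point at infinity and the resulting bound $\sup f\leq C(1+\int_{\mathbb{CP}^1}f\,\omega_g)$, whereas you reprove this on $S^2$ by hand via the Green's function together with a removable-singularity argument; your version is more self-contained, at the cost of having to justify the Green's representation for a function not a priori defined at $\infty$, and your sketch of that point (a bounded function whose Laplacian is bounded below by an $L^1$ density cannot charge a single point in two dimensions) is the right classical fact. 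One small slip: $\phi\in L^1(\mathbb{R}^2)$ does \emph{not} follow from $\alpha\phi^{-1}\in L^\infty$ and $\int_{\mathbb{C}}\alpha\,dxdy<\infty$ --- that hypothesis only bounds $\phi$ from \emph{below} by a multiple of $\alpha$; integrability of $\phi$ follows instead from your own $L^q$ estimate, since $\|\phi\|_{L^1(dxdy)}=c\,\|\psi\|_{L^1(S^2,\tilde g)}\leq c\,\|\psi\|_{L^q(S^2,\tilde g)}\,V^{(q-1)/q}$, which is precisely the chain of inequalities the paper records.
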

\begin{proof}
Let us consider $\mathbb{CP}^1$ as a compactification  of $\mathbb{C}\cong\mathbb{R}^2$, denote
\begin{equation}\begin{split}
U_1=\{[(z_0,z_1)]\in\mathbb{CP}^1,z_0\neq0\},\
U_2=\{[(z_0,z_1)]\in\mathbb{CP}^1,z_1\neq0\},
\end{split}\end{equation}
and coordinate functions $z=z_1^{-1}z_2,\ w=z_2^{-1}z_1$ on $U_1$, $U_2$ respectively. We set
\begin{equation}\begin{split}
\omega_g=\sqrt{-1}\alpha dz\wedge d\ol{z}
=\sqrt{-1}\alpha(w^{-1})|w|^{-4}dw\wedge d\ol{w},
\end{split}\end{equation}
which it is a fundamental form of a Hermitian metric $g$ on $\mathbb{CP}^1$ and it holds
\begin{equation}\begin{split}
\omega_{g}=2\alpha\omega_{g_0},\
\sqrt{-1}\Lambda_{\omega_g}\pa\ol\pa=2^{-1}\alpha^{-1}\sqrt{-1}\Lambda_{\omega_{g_0}}\pa\ol\pa.
\end{split}\end{equation}
\par Assume $f\in C^\infty(\mathbb{\mathbb{R}}^2)$ is nonnegative and bounded with
\begin{equation}\begin{split}
\sqrt{-1}\Lambda_{\omega_{g_0}}\pa\ol\pa f\geq-A\phi,
\end{split}\end{equation}
for a positive constant $A$, then it is equivalent to
\begin{equation}\begin{split}
\sqrt{-1}\Lambda_{\omega_{g}}\pa\ol\pa f\geq-2^{-1}A\alpha^{-1}\phi.
\end{split}\end{equation}
We compute
\begin{equation}\begin{split}
||\phi||_{L^1(\mathbb{R}^2)}
&=||(2\alpha)^{-1}\phi||_{L^1(\mathbb{R}^2,g)}
\\&\leq||(2\alpha)^{-1}\phi||_{L^q(\mathbb{R}^2,g)}\vol(\mathbb{R}^2,g)^{\frac{q-1}{q}}
\\&=||(2\alpha)^{1-q}\phi^q||_{L^1(\mathbb{R}^2)}^{\frac{1}{q}}\vol(\mathbb{R}^2,g)^{\frac{q-1}{q}}
\\&<\infty,
\end{split}\end{equation}
by \cite[Proposition 2.2]{Si1988} we conclude the following inequality holds weakly on $\mathbb{CP}^1$,
\begin{equation}\begin{split}
\sqrt{-1}\Lambda_{\omega_{g}}\pa\ol\pa f\geq-2^{-1}A\alpha^{-1}\phi.
\end{split}\end{equation}
Therefore there is a constant $C$ independent of $f$ such that
\begin{equation}\begin{split}
f&\leq C(1+\int_{\mathbb{CP}^1}f\omega_g)
\\&=C(1+2\int_{\mathbb{R}^2}f\alpha\dvol_{g_0})
\\&\leq C(1+2\sup\limits_{\mathbb{R}^2}\alpha\phi^{-1}\int_{\mathbb{R}^2}f\phi\dvol_{g_0})
\\&\leq(C+2\sup\limits_{\mathbb{R}^2}\alpha\phi^{-1})(1+\int_{\mathbb{R}^2}f\phi\dvol_{g_0}),
\end{split}\end{equation}
where we have used $\alpha^{-1}\phi\in L^q(\mathbb{R}^2,g)$ for some $q>1$.
\end{proof}
If we take $\alpha=(1+|z|^2)^{-2}$, $\phi=\phi_p=(1+|z|^2)^{-p}$, it follows
\begin{corollary}\label{Acor1}
$(\mathbb{R}^2,g_0)$ satisfies the Main Assumption with $\phi_{\mathbb{R}^2}=\phi_{p}$ and $p\in(1,2]$.
\end{corollary}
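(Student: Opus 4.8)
The plan is simply to verify that the explicit pair $\alpha=(1+|z|^2)^{-2}$ and $\phi=\phi_p=(1+|z|^2)^{-p}$ satisfies the two hypotheses of Lemma \ref{Alemma1} for every $p\in(1,2]$, and then to invoke that lemma verbatim. Thus the whole argument reduces to three elementary checks, and there is no serious analytic content beyond choosing an admissible exponent $q$.

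First I would dispose of condition (1). Positivity of $\alpha$ is clear. To see that $\alpha(z^{-1})|z|^{-4}$ extends to a positive smooth function, I pass to the chart $w=z^{-1}$, so that $|z|^2=|w|^{-2}$, and compute
\begin{equation}\begin{split}
\alpha(w^{-1})|w|^{-4}=(1+|w|^{-2})^{-2}|w|^{-4}=\frac{|w|^4}{(1+|w|^2)^2}\,|w|^{-4}=(1+|w|^2)^{-2},
\end{split}\end{equation}
which is manifestly smooth and positive across $w=0$ (indeed $\omega_g$ is, up to scale, the Fubini--Study form on $\mathbb{CP}^1$). Hence the form $\sqrt{-1}\alpha\,dz\wedge d\ol{z}$ genuinely extends to a Hermitian metric on the compactification $\mathbb{CP}^1$, as required by the lemma.

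Next I would check condition (2). Nonnegativity of $\phi_p$ is immediate. For the boundedness I compute $\alpha\phi^{-1}=(1+|z|^2)^{p-2}$, whose exponent $p-2$ is nonpositive precisely because $p\leq 2$, so $\alpha\phi^{-1}\in L^\infty(\mathbb{R}^2)$. For the integrability I compute
\begin{equation}\begin{split}
\alpha^{1-q}\phi^q=(1+|z|^2)^{2(q-1)}(1+|z|^2)^{-pq}=(1+|z|^2)^{q(2-p)-2},
\end{split}\end{equation}
and recall that on $\mathbb{R}^2$ the power $(1+|z|^2)^{-\beta}$ lies in $L^1$ exactly when $\beta>1$. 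Thus I need $2-q(2-p)>1$, that is $q(2-p)<1$.

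The only point requiring a moment's thought --- and the closest thing to an obstacle --- is exhibiting such a $q>1$. For $p\in(1,2)$ one has $2-p\in(0,1)$, so $\tfrac{1}{2-p}>1$ and any $q\in\bigl(1,\tfrac{1}{2-p}\bigr)$ works; the interval is nonempty precisely because $p>1$, and it shrinks to a point as $p\to 2^-$. At the endpoint $p=2$ one has $q(2-p)=0<1$ for every $q>1$, so the condition is satisfied trivially. In either case a valid $q>1$ exists, both hypotheses of Lemma \ref{Alemma1} hold, and the corollary follows at once.
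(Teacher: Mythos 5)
Your proposal is correct and follows exactly the paper's route: the paper obtains Corollary \ref{Acor1} by substituting $\alpha=(1+|z|^2)^{-2}$ and $\phi=\phi_p$ into Lemma \ref{Alemma1}, and your three checks (the chart computation giving $(1+|w|^2)^{-2}$, the bound $\alpha\phi^{-1}=(1+|z|^2)^{p-2}$, and the choice of $q>1$ with $q(2-p)<1$) are precisely the verifications the paper leaves implicit. Nothing is missing.
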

The following lemma demonstrates that $\mathbb{R}^2$ can also satisfy our Main Assumption when the underlying metric is not the standard one.
\begin{lemma}\label{Alemma2}
If $\alpha, \phi, \psi\in C^\infty(\mathbb{R}^2)$ satisfy
\begin{enumerate}
\item $\alpha>0$ and $\alpha(z^{-1})|z|^{-4}$ extends to a positive smooth function on $\mathbb{R}^2$,
\item $\phi>0$, $\psi\geq0$, $\alpha\phi^{k}\psi^{-1}\in L^\infty(\mathbb{R}^2)$ and $\alpha^{1-q}(\phi^{-k}\psi)^{q}\in L^1(\mathbb{R}^2)$ for some $q>1$,
\end{enumerate}
then $(\mathbb{R}^2,g_k)$ satisfies the Main Assumption with $\phi_{\mathbb{R}^2}=\psi$, where $g_k=\phi^{-k}g_0$.
\end{lemma}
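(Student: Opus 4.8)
The plan is to reduce the statement directly to Lemma \ref{Alemma1} by exploiting the conformal behaviour of both the Chern Laplacian and the volume element under the rescaling $g_k=\phi^{-k}g_0$. Since we work in complex dimension one, a conformal change multiplies the fundamental form by $\phi^{-k}$, so that
\[
\sqrt{-1}\Lambda_{\omega_{g_k}}\pa\ol\pa=\phi^{k}\,\sqrt{-1}\Lambda_{\omega_{g_0}}\pa\ol\pa,\qquad \dvol_{g_k}=\phi^{-k}\dvol_{g_0}.
\]
The first identity I would record from $(g_k)_{1\ol1}=\phi^{-k}(g_0)_{1\ol1}$ together with the local formula $\sqrt{-1}\Lambda_{\omega}\pa\ol\pa u=(g_{1\ol1})^{-1}u_{z\ol z}$ in one complex variable; the second is the usual scaling of the area form, valid because $\phi^{-k}>0$.

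With these in hand, I would set $\tilde\phi:=\phi^{-k}\psi$ and apply Lemma \ref{Alemma1} to the pair $(\alpha,\tilde\phi)$. First I must verify the two hypotheses of that lemma. Hypothesis (1) is unchanged, as it only concerns $\alpha$. For hypothesis (2), note $\tilde\phi\geq0$ because $\phi>0$ and $\psi\geq0$; moreover $\alpha\tilde\phi^{-1}=\alpha\phi^{k}\psi^{-1}\in L^\infty(\mathbb{R}^2)$ is precisely the first integrability condition assumed here, while $\alpha^{1-q}\tilde\phi^{q}=\alpha^{1-q}(\phi^{-k}\psi)^{q}\in L^1(\mathbb{R}^2)$ is the second. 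Thus Lemma \ref{Alemma1} applies and yields that $(\mathbb{R}^2,g_0)$ satisfies the Main Assumption with $\phi_{\mathbb{R}^2}=\tilde\phi$.

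It remains to transport this conclusion to $(\mathbb{R}^2,g_k)$ with $\phi_{\mathbb{R}^2}=\psi$. Given $f\in C^\infty(\mathbb{R}^2)$ nonnegative and bounded with $\sqrt{-1}\Lambda_{\omega_{g_k}}\pa\ol\pa f\geq-A\psi$, the first scaling identity and $\phi^k>0$ turn this into $\sqrt{-1}\Lambda_{\omega_{g_0}}\pa\ol\pa f\geq-A\tilde\phi$, which is exactly the hypothesis needed to invoke the Main Assumption for $(\mathbb{R}^2,g_0)$ with comparison function $\tilde\phi$. The resulting bound $\sup_{\mathbb{R}^2}f\leq C_A(1+\int_{\mathbb{R}^2}f\tilde\phi\,\dvol_{g_0})$ then converts, via
\[
\int_{\mathbb{R}^2}f\tilde\phi\,\dvol_{g_0}=\int_{\mathbb{R}^2}f\psi\,\phi^{-k}\dvol_{g_0}=\int_{\mathbb{R}^2}f\psi\,\dvol_{g_k},
\]
into the desired inequality $\sup_{\mathbb{R}^2}f\leq C_A(1+\int_{\mathbb{R}^2}f\psi\,\dvol_{g_k})$; the integrability of $\psi$ against $\dvol_{g_k}$ is automatic, since it coincides with the finiteness of $\tilde\phi$ in $L^1(\mathbb{R}^2,g_0)$ already established inside the proof of Lemma \ref{Alemma1}. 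The only point requiring genuine care, rather than bookkeeping, is pinning down the correct conformal weights in the two scaling identities above; once these are fixed, the argument is a formal substitution $\phi\mapsto\phi^{-k}\psi$ into the previous lemma.
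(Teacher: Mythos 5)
Your proposal is correct and follows essentially the same route as the paper: the paper's proof likewise converts the differential inequality for $g_k$ into one for $g_0$ with weight $\phi^{-k}\psi$, checks that $\|\phi^{-k}\psi\|_{L^1(\mathbb{R}^2)}<\infty$ via the $L^q$ hypothesis, and then runs the conclusion of Lemma \ref{Alemma1} with $\phi$ replaced by $\phi^{-k}\psi$ before converting the integral back to $\dvol_{g_k}$. Your explicit recording of the conformal scaling identities and the clean "apply Lemma \ref{Alemma1} to $(\alpha,\phi^{-k}\psi)$" packaging is only a presentational difference from the paper's rederivation of the same estimates.
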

\begin{proof}
We use the notations in Lemma \ref{Alemma1} and note
\begin{equation}\begin{split}
||\psi||_{L^1(\mathbb{R}^2,g_k)}
&=||\phi^{-k}\psi||_{L^1(\mathbb{R}^2)}
\\&=||(2\alpha)^{-1}\phi^{-k}\psi||_{L^1(\mathbb{R}^2,g)}
\\&\leq||(2\alpha)^{-1}\phi^{-k}\psi||_{L^q(\mathbb{R}^2,g)}\vol(\mathbb{R}^2,g)^{\frac{q-1}{q}}
\\&=||(2\alpha)^{1-q}(\phi^{-k}\psi)^q||_{L^1(\mathbb{R}^2)}^{\frac{1}{q}}\vol(\mathbb{R}^2,g)^{\frac{q-1}{q}}
\\&<\infty.
\end{split}\end{equation}
Assume $f\in C^\infty(\mathbb{\mathbb{R}}^2)$ is nonnegative and bounded with
\begin{equation}\begin{split}
\sqrt{-1}\Lambda_{\omega_{g_k}}\pa\ol\pa f\geq-A\psi,
\end{split}\end{equation}
for a positive constant $A$, then
\begin{equation}\begin{split}
\sqrt{-1}\Lambda_{\omega_{g_0}}\pa\ol\pa f\geq-A\phi^{-k}\psi.
\end{split}\end{equation}
Therefore there is a uniform constant $C$ such that
\begin{equation}\begin{split}
f
&\leq(C+2\sup_{\mathbb{R}^2}\alpha\phi^k\psi^{-1})(1+\int_{\mathbb{R}^2}f\phi^{-k}\psi\dvol_{g_0})
\\&=(C+2\sup_{\mathbb{R}^2}\alpha\phi^k\psi^{-1})(1+\int_{\mathbb{R}^2}f\psi\dvol_{g_k}).
\end{split}\end{equation}
\end{proof}
Let $\alpha=(1+|z|^2)^{-2}$, $\phi=(1+|z|^2)^{-1}$ and $\psi=\psi_l=(1+|z|^2)^{-l}$, it is easy to see
\begin{corollary}\label{Acor2}
$(\mathbb{R}^2,g_k)$ satisfies the Main Assumption with $\phi_{\mathbb{R}^2}=\psi_l$, where $g_k=\phi^{-k}g_0$ and $l>0$, $k\in[l-2,l-1)$ are two constants.
\end{corollary}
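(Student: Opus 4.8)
The plan is to verify directly that the stated functions $\alpha=(1+|z|^2)^{-2}$, $\phi=(1+|z|^2)^{-1}$, $\psi=\psi_l=(1+|z|^2)^{-l}$ fulfil the two hypotheses of Lemma \ref{Alemma2}, after which the conclusion is immediate. The whole point is that the interval $k\in[l-2,l-1)$ is precisely the range for which both hypotheses can be met simultaneously, the two endpoints coming from the two separate conditions in (2).

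First I would check condition (1). Since $\alpha>0$ is clear, the only issue is the behaviour under the inversion $z\mapsto z^{-1}$. Simplifying,
\begin{equation}\begin{split}
\alpha(z^{-1})|z|^{-4}=(1+|z|^{-2})^{-2}|z|^{-4}=\frac{|z|^4}{(1+|z|^2)^2}\cdot|z|^{-4}=(1+|z|^2)^{-2},
\end{split}\end{equation}
which is again a positive smooth function on $\mathbb{R}^2$, so (1) holds (indeed $\alpha$ is its own reflection here).

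For condition (2), both $\phi>0$ and $\psi\geq0$ are obvious, and the remaining two requirements reduce to inequalities between powers of $(1+|z|^2)$. The plan is to read off the exponents: $\alpha\phi^k\psi^{-1}=(1+|z|^2)^{l-k-2}$, so the $L^\infty$-condition is exactly $l-k-2\leq0$, i.e. $k\geq l-2$, which holds by hypothesis. Likewise $\alpha^{1-q}(\phi^{-k}\psi)^q=(1+|z|^2)^{2(q-1)+q(k-l)}$, and since a power $(1+|z|^2)^{a}$ lies in $L^1(\mathbb{R}^2)$ precisely when $a<-1$, the $L^1$-condition becomes $q(2+k-l)<1$.

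The only genuine content is then to produce an admissible exponent $q>1$, and here the right endpoint of the interval enters: from $k\in[l-2,l-1)$ we get $2+k-l\in[0,1)$. If $k=l-2$ then $2+k-l=0$ and $q(2+k-l)<1$ holds for every $q$; if $k\in(l-2,l-1)$ then $2+k-l\in(0,1)$, so $1/(2+k-l)>1$ and any $q\in(1,1/(2+k-l))$ works. In either case such a $q>1$ exists, condition (2) is verified, and Lemma \ref{Alemma2} applies with $\phi_{\mathbb{R}^2}=\psi_l$. The delicate point, such as it is, is precisely this matching: the left endpoint $k=l-2$ is forced by the $L^\infty$-bound on $\alpha\phi^k\psi^{-1}$, while the strict right endpoint $k<l-1$ is exactly what keeps $2+k-l$ below $1$ and hence leaves room for an admissible $q>1$ in the $L^1$-bound.
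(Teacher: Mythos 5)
Your proof is correct and follows exactly the route the paper intends: the paper simply plugs $\alpha=(1+|z|^2)^{-2}$, $\phi=(1+|z|^2)^{-1}$, $\psi=(1+|z|^2)^{-l}$ into Lemma \ref{Alemma2} and declares the hypotheses ``easy to see.'' Your exponent computations (the $L^\infty$ condition giving $k\geq l-2$ and the $L^1$ condition $q(2+k-l)<1$ giving $k<l-1$ with an admissible $q>1$) are exactly the verification the paper omits.
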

Let us consider $(\mathbb{R}^2,g_k)$ and $\phi_{\mathbb{R}^2}$ given in Lemma \ref{Alemma2}. Using $(\ref{conformal})$, we have
\begin{equation}\begin{split}\label{Sch1}
S_{g_k}^{Ch}
&=k\phi^{k}\sqrt{-1}\Lambda_{\omega_{g_0}}\pa\ol\pa\log\phi
=\frac{k}{2}\phi^{k}\Delta\log\phi.
\end{split}\end{equation}
In view of Theorem \ref{KWthm}, we have
\begin{proposition}\label{Aprop1}
Let $\alpha, \phi, \psi\in C^\infty(\mathbb{R}^2)$ be three functions satisfying
\begin{enumerate}
\item $\phi>0$ and $\psi\geq0$,
\item $\alpha>0$ and $\alpha(z^{-1})|z|^{-4}$ extends to a positive smooth function on $\mathbb{R}^2$.
\end{enumerate}
If $K\in C^\infty(\mathbb{R}^2)$ is nonpositive and nonzero with decay $|K|\leq\Lambda\psi$ for a constant $\Lambda$, then for any constant $k$ such that
\begin{enumerate}
\item $\alpha\phi^{k}\psi^{-1}\in L^\infty(\mathbb{R}^2)$ and $\alpha^{1-q}(\phi^{-k}\psi)^{q}\in L^1(\mathbb{R}^2)$ for some $q>1$,
\item $\int_{\mathbb{R}^2}k\Delta\log\phi<0$ and $|k\phi^{k}\Delta\log\phi|\leq2\Lambda\psi$,
\end{enumerate}
the prescribed curvature equation
\begin{equation}\begin{split}\label{multipleeq1}
\sqrt{-1}\Lambda_{\omega_{g_k}}\pa\ol\pa v+Ke^{v}=S^{Ch}_{g_k}
\end{split}\end{equation}
possesses a solution $v_k\in C^\infty(\mathbb{R}^2)\cap L^\infty(\mathbb{R}^2)$ with $|dv_k|\in L^2(\mathbb{R}^2)$.
\end{proposition}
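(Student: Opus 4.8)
The plan is to recognize the prescribed curvature equation $(\ref{multipleeq1})$ as a special instance of the Kazdan--Warner type equation treated in Theorem \ref{KWthm}, taking the underlying manifold to be $(\mathbb{R}^2,g_k)$, the weight to be $\phi_{\mathbb{R}^2}=\psi$, the prescribed function to be $h=K$ and the source to be $f=S^{Ch}_{g_k}$. Since the conclusion of Theorem \ref{KWthm} delivers exactly a bounded smooth solution with $L^2$ gradient, the whole argument reduces to verifying that all hypotheses of that theorem are met; there is no new analysis to perform.

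First I would supply the standing Main Assumption. Conditions (1) and (2) on $\alpha,\phi,\psi$, combined with the first listed condition on $k$, namely $\alpha\phi^{k}\psi^{-1}\in L^\infty(\mathbb{R}^2)$ and $\alpha^{1-q}(\phi^{-k}\psi)^{q}\in L^1(\mathbb{R}^2)$ for some $q>1$, are precisely the hypotheses of Lemma \ref{Alemma2}. Hence $(\mathbb{R}^2,g_k)$ satisfies the Main Assumption with $\phi_{\mathbb{R}^2}=\psi$, which is the prerequisite for applying Theorem \ref{KWthm} on this noncompact manifold.

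Next I would record the Chern scalar curvature via $(\ref{Sch1})$, namely $S^{Ch}_{g_k}=\tfrac{k}{2}\phi^{k}\Delta\log\phi$, and check the three remaining hypotheses of Theorem \ref{KWthm} with $f=S^{Ch}_{g_k}$ and $h=K$. For the sign condition, since $g_k=\phi^{-k}g_0$ is a conformal change in real dimension two one has $\dvol_{g_k}=\phi^{-k}\dvol_{g_0}$, so the factor $\phi^{k}$ cancels and $\int_{\mathbb{R}^2}S^{Ch}_{g_k}\dvol_{g_k}=\tfrac12\int_{\mathbb{R}^2}k\Delta\log\phi<0$ by the second listed condition on $k$. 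The same condition gives $|S^{Ch}_{g_k}|=\tfrac12|k\phi^{k}\Delta\log\phi|\le\Lambda\psi$, that is $|f|\le\Lambda\phi_{\mathbb{R}^2}$. Finally, $K$ being nonpositive with $|K|\le\Lambda\psi$ yields $-\Lambda\psi\le K\le0$, i.e. $-\Lambda\phi_{\mathbb{R}^2}\le h\le0$; both $f$ and $h$ are nonzero, the former because its integral is strictly negative and the latter by hypothesis.

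With these verifications in place, Theorem \ref{KWthm} applies verbatim and produces the required $v_k\in C^\infty(\mathbb{R}^2)\cap L^\infty(\mathbb{R}^2)$ with $|dv_k|\in L^2(\mathbb{R}^2)$. Rather than a genuine obstacle, the only point demanding care is the conformal bookkeeping: confirming the two-dimensional volume scaling $\dvol_{g_k}=\phi^{-k}\dvol_{g_0}$ so that the weight $\phi^{k}$ in the curvature cancels and the integral sign condition collapses cleanly onto the stated condition $\int_{\mathbb{R}^2}k\Delta\log\phi<0$. Everything else is a direct transcription of the hypotheses into those of Theorem \ref{KWthm}.
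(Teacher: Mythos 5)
Your proposal is correct and follows the paper's own route exactly: the paper derives Proposition \ref{Aprop1} by invoking Lemma \ref{Alemma2} to secure the Main Assumption for $(\mathbb{R}^2,g_k)$ with $\phi_{\mathbb{R}^2}=\psi$, computing $S^{Ch}_{g_k}=\tfrac{k}{2}\phi^{k}\Delta\log\phi$ via $(\ref{Sch1})$, and then applying Theorem \ref{KWthm} with $f=S^{Ch}_{g_k}$ and $h=K$. Your explicit checks of the sign, decay, and nonvanishing hypotheses (including the cancellation of $\phi^{k}$ against $\dvol_{g_k}=\phi^{-k}\dvol_{g_0}$) are exactly the verifications the paper leaves implicit.
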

Since $v$ satisfies $(\ref{multipleeq1})$ if and only if the metric
\begin{equation}\begin{split}
e^{v}g_k=e^{u}g_0,\ u=v-k\log\phi,
\end{split}\end{equation}
has Chern scalar curvature $K$, which is also equivalent to
\begin{equation}\begin{split}\label{multipleeq2}
\sqrt{-1}\Lambda_{\omega_{g_0}}\pa\ol\pa u+Ke^{u}=0.
\end{split}\end{equation}
\begin{theorem}\label{Athm1}
Let $\alpha,\phi,\psi\in C^\infty(\mathbb{R}^2)$ be three functions satisfying
\begin{enumerate}
\item $\phi>0$ and $\psi\geq0$,
\item $\alpha>0$ and $\alpha(z^{-1})|z|^{-4}$ extends to a positive smooth function on $\mathbb{R}^2$.
\end{enumerate}
If $K\in C^\infty(\mathbb{R}^2)$ is nonpositive and nonzero with decay $|K|\leq\Lambda\psi$ for a constant $\Lambda$, then for any constant $k$ such that
\begin{enumerate}
\item $\alpha\phi^{k}\psi^{-1}\in L^\infty(\mathbb{R}^2)$ and $\alpha^{1-q}(\phi^{-k}\psi)^{q}\in L^1(\mathbb{R}^2)$ for some $q>1$,
\item $\int_{\mathbb{R}^2}k\Delta\log\phi<0$ and $|k\phi^{k}\Delta\log\phi|\leq2\Lambda\psi$,
\end{enumerate}
$(\ref{eqR2})$ possesses a solution on $\mathbb{R}^2$ and it has the following form
\begin{equation}\begin{split}
2u_k=v_k-k\log\phi,
\end{split}\end{equation}
where $v_k\in C^\infty(\mathbb{R}^2)\cap L^\infty(\mathbb{R}^2)$ with $|dv_k|\in L^2(\mathbb{R}^2)$.
\end{theorem}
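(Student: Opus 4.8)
The plan is to read this theorem off as a direct translation of Proposition \ref{Aprop1}, since the two statements carry \emph{identical} hypotheses on $\alpha,\phi,\psi,K$ and on the constant $k$. Thus the first step is simply to invoke Proposition \ref{Aprop1}, which produces a function $v_k\in C^\infty(\mathbb{R}^2)\cap L^\infty(\mathbb{R}^2)$ with $|dv_k|\in L^2(\mathbb{R}^2)$ solving the prescribed curvature equation $(\ref{multipleeq1})$, namely $\sqrt{-1}\Lambda_{\omega_{g_k}}\pa\ol\pa v_k+Ke^{v_k}=S^{Ch}_{g_k}$. All the analytic content --- the verification of the Main Assumption for $(\mathbb{R}^2,g_k)$ via Lemma \ref{Alemma2} and the solvability of the Kazdan-Warner type equation via Theorem \ref{KWthm} --- is already packaged inside this single citation.

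The second step is to pass from the metric $g_k=\phi^{-k}g_0$ back to the standard metric $g_0$. Writing $u=v_k-k\log\phi$, the conformal factors satisfy $e^{v_k}g_k=e^{u}g_0$, so $e^{u}g_0$ is exactly the metric whose Chern scalar curvature equals $K$; this is the equivalence recorded in the paragraph preceding the statement, and it says precisely that $u$ solves $(\ref{multipleeq2})$, i.e. $\sqrt{-1}\Lambda_{\omega_{g_0}}\pa\ol\pa u+Ke^{u}=0$ (here $S^{Ch}_{g_0}=0$ because $g_0$ is flat). Since $\phi>0$ is smooth, $\log\phi$ is smooth and hence $u$ is smooth on all of $\mathbb{R}^2$.

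The final step is to unwind the Chern operator on the flat plane into the ordinary Laplacian. The computation behind $(\ref{Sch1})$ gives $\sqrt{-1}\Lambda_{\omega_{g_0}}\pa\ol\pa f=\tfrac12\Delta f$, so $(\ref{multipleeq2})$ reads $\tfrac12\Delta u+Ke^{u}=0$. Setting $u_k=u/2$, that is $2u_k=u=v_k-k\log\phi$, and substituting $u=2u_k$ turns this into $\Delta u_k+Ke^{2u_k}=0$, which is exactly $(\ref{eqR2})$. The regularity and integrability of $v_k$ transfer verbatim, giving the asserted form $2u_k=v_k-k\log\phi$ with $v_k\in C^\infty(\mathbb{R}^2)\cap L^\infty(\mathbb{R}^2)$ and $|dv_k|\in L^2(\mathbb{R}^2)$.

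There is no genuine obstacle at the level of this theorem: all the difficulty resides upstream, in Theorem \ref{KWthm} (the uniform zeroth order estimate on the noncompact space via the Main Assumption) and in the verification that $g_k$ meets that assumption. The only point demanding care here is the bookkeeping of numerical factors --- the factor $\tfrac12$ relating $\sqrt{-1}\Lambda_{\omega_{g_0}}\pa\ol\pa$ to $\Delta$ and the rescaling $u=2u_k$ --- so that the decomposition $2u_k=v_k-k\log\phi$ comes out with the correct normalization; note that with $\phi=(1+|z|^2)^{-1}$ the term $-k\log\phi=k\log(1+|z|^2)$ is what will ultimately supply the logarithmic growth appearing in Theorem \ref{thm3}.
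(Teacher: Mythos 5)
Your proposal is correct and follows exactly the route the paper intends: Theorem \ref{Athm1} is stated without a separate proof precisely because it is the combination of Proposition \ref{Aprop1} with the conformal equivalence $e^{v_k}g_k=e^{u}g_0$, $u=v_k-k\log\phi$, recorded in the preceding paragraph, plus the identity $\sqrt{-1}\Lambda_{\omega_{g_0}}\pa\ol\pa=\tfrac12\Delta$ and the substitution $u=2u_k$. Your bookkeeping of the factor of $2$ and the observation $S^{Ch}_{g_0}=0$ are both accurate, so nothing is missing.
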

\begin{rem}\label{00000}
Let $k_1$, $k_2$ be two constants satisfying the conditions of $k$, then 
\begin{equation}\begin{split}
C_1-(k_1-k_2)\log\phi\leq
2(u_{k_1}-u_{k_2})
\leq C_2-(k_1-k_2)\log\phi,
\end{split}\end{equation}
where $C_1=\inf\limits_{\mathbb{R}^2}(v_{k_1}-v_{k_2})$ and $C_2=\sup\limits_{\mathbb{R}^2}(v_{k_1}-v_{k_2})$.
\end{rem}
We also have the uniqueness.
\begin{theorem}\label{Athm2}
Any solution $u$ to $(\ref{eqR2})$ which takes the form
\begin{equation}\begin{split}
2u=v-k\log\phi,
\end{split}\end{equation}
with $v\in C^\infty(\mathbb{R}^2)\cap L^\infty(\mathbb{R}^2)$, $k$ and $\phi$ given in Theorem \ref{Athm1}, is exactly $u_k$.
\end{theorem}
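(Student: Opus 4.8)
The plan is to show that any two solutions of the prescribed form coincide by an energy estimate adapted to the parabolicity of $\mathbb{R}^2$. First I would set $w:=u-u_k$. Since $2u=v-k\log\phi$ and $2u_k=v_k-k\log\phi$ share the \emph{same} $k$ and $\phi$, the logarithmic parts cancel, so $2w=v-v_k$ is globally bounded by hypothesis (both $v,v_k\in L^\infty$); this is the one place where the specific form of the solutions enters. Subtracting the two copies of $(\ref{eqR2})$ and using $K\le0$ gives
\begin{equation}\begin{split}
\Delta w=-K(e^{2u}-e^{2u_k})=|K|(e^{2u}-e^{2u_k}).
\end{split}\end{equation}
Because $t\mapsto e^{2t}$ is increasing, $e^{2u}-e^{2u_k}$ has the same sign as $u-u_k=w$ at every point, so
\begin{equation}\begin{split}
W:=w\,\Delta w=|K|\,w\,(e^{2u}-e^{2u_k})\geq0\quad\text{on }\mathbb{R}^2.
\end{split}\end{equation}

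Next I would run the energy argument against the logarithmic cutoff $\eta_R$ with $\eta_R\equiv1$ on $B_R$, $\eta_R(x)=2-\frac{\log|x|}{\log R}$ on $B_{R^2}\setminus B_R$ and $\eta_R\equiv0$ outside $B_{R^2}$, for which a direct computation gives $\int_{\mathbb{R}^2}|\nabla\eta_R|^2\,dx=\frac{2\pi}{\log R}\to0$. Multiplying $\Delta w$ by $\eta_R^2 w$ and integrating by parts over the compact set $B_{R^2}$, then applying Young's inequality to the cross term, yields
\begin{equation}\begin{split}
\int_{\mathbb{R}^2}\eta_R^2\,W\,dx+\tfrac12\int_{\mathbb{R}^2}\eta_R^2|\nabla w|^2\,dx
\leq2\|w\|_{L^\infty}^2\int_{\mathbb{R}^2}|\nabla\eta_R|^2\,dx.
\end{split}\end{equation}
Letting $R\to\infty$, the right-hand side tends to $0$ while both terms on the left are nonnegative and increase monotonically to $\int_{\mathbb{R}^2}W\,dx$ and $\tfrac12\int_{\mathbb{R}^2}|\nabla w|^2\,dx$. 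Hence $\int_{\mathbb{R}^2}|\nabla w|^2\,dx=0$ and $\int_{\mathbb{R}^2}W\,dx=0$. Note this uses only the boundedness of $w$, so I need not assume a priori that $|dw|\in L^2$.

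Finally, $\int_{\mathbb{R}^2}|\nabla w|^2=0$ forces $w$ to be constant, and $W\ge0$ with $\int W=0$ forces $W\equiv0$. On the nonempty open set $\{K\ne0\}$ (nonempty since $K$ is smooth and not identically zero) the identity $W=|K|\,w\,(e^{2u}-e^{2u_k})=0$ gives $w\,(e^{2u}-e^{2u_k})=0$, and since that factor vanishes exactly when $w=0$, we get $w=0$ there; a constant that vanishes somewhere vanishes everywhere, so $u\equiv u_k$. I expect the main obstacle to be controlling the noncompactness in the integration by parts, i.e.\ ruling out a nontrivial contribution from infinity: this is precisely what the parabolicity of $\mathbb{R}^2$ supplies through the logarithmic cutoff, and it is the reason the argument works in the plane. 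A secondary subtlety is that $K\le0$ is essential, since it is what makes $W$ sign-definite; with indefinite $K$ the same scheme would fail. (An equivalent route would replace the energy estimate by Kato's inequality, noting that $w_+=\max(w,0)$ is bounded and weakly subharmonic and hence constant on the parabolic manifold $\mathbb{R}^2$, followed by the same use of $K\not\equiv0$.)
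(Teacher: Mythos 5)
Your proof is correct, and it takes a recognizably different route from the paper's while resting on the same two pillars: the sign condition $K\le 0$ and the parabolicity of $\mathbb{R}^2$. The paper works with $v$ and $v_k$ in the metric $g_k$, forms the auxiliary function $e^{v-v_k}+e^{v_k-v}$, shows it is a bounded subharmonic function (the cross term $-K(e^{v-v_k}-e^{v_k-v})(e^{v}-e^{v_k})\ge 0$ playing the role of your $W\ge0$), and then invokes the Liouville theorem for bounded subharmonic functions on the plane; you instead run a Caccioppoli-type energy estimate directly on $w=u-u_k$ with the logarithmic cutoff, which in effect re-proves the parabolicity statement you need rather than quoting it. What your version buys is self-containedness and, more importantly, an explicit last step: after concluding that the relevant quantity is constant, one must still use $K\not\equiv 0$ to rule out a nonzero constant difference (a constant $c\neq 0$ with $v=v_k+c$ is not immediately excluded by subharmonicity alone), and you carry this out cleanly via $W\equiv 0$ on $\{K\neq 0\}$, whereas the paper compresses it into ``then it follows $u=u_k$.'' Two small points worth noting: your cutoff $\eta_R$ is only Lipschitz, so the integration by parts should be justified by mollification or by observing that $\eta_R^2 w\in W^{1,\infty}$ with compact support; and the monotone convergence you invoke does hold because $\eta_R(x)$ is nondecreasing in $R$ for each fixed $x$, though Fatou's lemma would suffice.
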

\begin{proof}
One sees that $v$ and $v_k$ satisfy
\begin{equation}\begin{split}
\sqrt{-1}\Lambda_{\omega_{g_k}}\pa\ol\pa v+Ke^{v}=S^{Ch}_{g_k},
\end{split}\end{equation}
\begin{equation}\begin{split}
\sqrt{-1}\Lambda_{\omega_{g_k}}\pa\ol\pa v_k+Ke^{v_k}=S^{Ch}_{g_k},
\end{split}\end{equation}
where $g_k=\phi^{-k}g_0$. We compute
\begin{equation}\begin{split}
\sqrt{-1}\Lambda_{g_k}\pa\ol\pa(e^{v-v_{k}}+e^{v-v_{k}})
&=(e^{v-v_{k}}-e^{v_{k}-v})
\sqrt{-1}\Lambda_{g_k}\pa\ol\pa(v-v_{k})
\\&+(e^{v-v_{k}}+e^{v_{k}-v})|\pa(v-v_k)|^2_{g_k}
\\&\geq-K(e^{v-v_k}-e^{v_k-v})
(e^v-e^{v_k})
\\&\geq0.
\end{split}\end{equation}
Namely, $e^{v-v_{k}}+e^{v-v_{k}}$ is a bounded sub-harmonic function on $\mathbb{R}^2$ and hence constant by standard Liouville theorem, then it follows $u=u_k$.
\end{proof}
If we consider $(\mathbb{R}^2,g_k)$ and $\phi_{\mathbb{R}^2}$ given in Corollary \ref{Acor2}, we have
\begin{equation}\begin{split}\label{Sch1}
k\phi^{k}\Delta\log\phi
&=4k\phi^{k}\frac{\pa^2\log\phi}{\pa z\pa\ol{z}}
\\&=\frac{-4k}{(1+|z|^2)^{k}}(\frac{\frac{\pa^2|z|^2}{\pa z\pa\ol{z}}}{1+|z|^2}-\frac{|\pa |z|^2|^2}{(1+|z|^2)^2})
\\&=\frac{4k}{(1+|z|^2)^{k}}(\frac{|z|^2}{(1+|z|^2)^2}-\frac{1}{1+|z|^2})
\\&=\frac{-4k}{(1+|z|^2)^{k+2}}.
\end{split}\end{equation}
If $k>0$ and $\Lambda\geq2k$, we have
\begin{equation}\begin{split}\label{Sch2}
\int_{\mathbb{R}^2}k\Delta\log\phi<0,\
|k\phi^{k}\Delta\log\phi|
\leq\frac{4k}{(1+|z|^2)^{l}}
=2\Lambda\phi_{\mathbb{R}^2}.
\end{split}\end{equation}
\par It follows from Theorem \ref{Athm1}, Theorem \ref{Athm2} and above that
\begin{proposition}\label{Aprop2}
If $K\in C^\infty(\mathbb{R}^2)$ is nonpositive and nonzero with decay $|K|\leq\Lambda(1+|z|^2)^{-l}$ for two constants $l>1$ and $\Lambda$. Then for any constant
\begin{equation}\begin{split}
k\in[l-2,l-1)\cap(0,\frac{\Lambda}{2}],
\end{split}\end{equation}
$(\ref{eqR2})$ possesses a solution on $\mathbb{R}^2$ and it has the following form
\begin{equation}\begin{split}
2u_k=v_k+k\log(1+|z|^2),
\end{split}\end{equation}
with $v_k\in C^\infty(\mathbb{R}^2)\cap L^\infty(\mathbb{R}^2)$ and $|dv_k|\in L^2(\mathbb{R}^2)$. On the other hand, any solution $u$ of the form
\begin{equation}\begin{split}
2u=v+k\log(1+|z|^2),
\end{split}\end{equation}
with $v\in C^\infty(\mathbb{R}^2)\cap L^\infty(\mathbb{R}^2)$ and constant $k\in[l-2,l-1)\cap(0,\frac{\Lambda}{2}]$, is exactly $u_k$.
\end{proposition}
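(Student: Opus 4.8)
The plan is to deduce this statement as the specialization of the general existence and uniqueness results, Theorem \ref{Athm1} and Theorem \ref{Athm2}, to the concrete weights
\[
\alpha=(1+|z|^2)^{-2},\qquad \phi=(1+|z|^2)^{-1},\qquad \psi=\psi_l=(1+|z|^2)^{-l},
\]
for which $g_k=\phi^{-k}g_0=(1+|z|^2)^{k}g_0$. Since $-k\log\phi=k\log(1+|z|^2)$, the asserted form of the solutions is exactly the output of those theorems, and all that remains is to check that their hypotheses hold precisely on the stated range of $k$.

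First I would confirm the structural hypotheses (1)--(2) appearing in the preamble of Theorem \ref{Athm1}: evidently $\phi>0$ and $\psi\ge0$, while the substitution $z\mapsto z^{-1}$ gives $\alpha(z^{-1})|z|^{-4}=(1+|z|^2)^{-2}$, a positive smooth function on all of $\mathbb{R}^2$. For the first of the two conditions imposed on $k$ (the integrability requirements), the computation is the content of Corollary \ref{Acor2}: writing $t=1+|z|^2$ one has $\alpha\phi^{k}\psi^{-1}=t^{\,l-k-2}$, which is bounded exactly when $k\ge l-2$, and $\alpha^{1-q}(\phi^{-k}\psi)^{q}=t^{\,q(k-l)+2(q-1)}$, which lies in $L^1(\mathbb{R}^2)$ for a suitable $q>1$ as soon as $q\bigl(k-(l-2)\bigr)<1$; because $k<l-1$ forces $k-(l-2)\in[0,1)$, such a $q$ can always be chosen. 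Hence this condition holds precisely for $k\in[l-2,l-1)$.

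Next I would verify the second condition imposed on $k$ (the curvature and sign condition) using the computations $(\ref{Sch1})$ and $(\ref{Sch2})$. From $(\ref{Sch1})$,
\[
k\phi^{k}\Delta\log\phi=\frac{-4k}{(1+|z|^2)^{k+2}},
\]
so for $k>0$ this is everywhere negative, giving $\int_{\mathbb{R}^2}k\Delta\log\phi<0$. For the pointwise bound, since $1+|z|^2\ge1$ and $k+2\ge l$ (already secured by $k\ge l-2$),
\[
|k\phi^{k}\Delta\log\phi|=\frac{4k}{(1+|z|^2)^{k+2}}\le\frac{4k}{(1+|z|^2)^{l}}\le\frac{2\Lambda}{(1+|z|^2)^{l}}=2\Lambda\psi_l,
\]
where the last inequality uses $\Lambda\ge2k$, that is $k\le\Lambda/2$. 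Intersecting with the range from the previous step isolates exactly $k\in[l-2,l-1)\cap(0,\Lambda/2]$.

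With every hypothesis verified on this range, Theorem \ref{Athm1} yields a solution of $(\ref{eqR2})$ of the form $2u_k=v_k-k\log\phi=v_k+k\log(1+|z|^2)$ with $v_k\in C^\infty(\mathbb{R}^2)\cap L^\infty(\mathbb{R}^2)$ and $|dv_k|\in L^2(\mathbb{R}^2)$, and Theorem \ref{Athm2} gives that any solution of the displayed form with the same constant $k$ coincides with $u_k$, completing the proof. The only step demanding genuine care is the simultaneous exponent bookkeeping: the lower bound $k\ge l-2$ is used twice, for the $L^\infty$ bound on $\alpha\phi^{k}\psi^{-1}$ and again for the decay matching in the curvature estimate, whereas the two upper constraints $k<l-1$ and $k\le\Lambda/2$ arise separately from $L^1$ integrability and from the prescribed size of $K$; one must confirm these four inequalities are mutually consistent so that their intersection is the sharp admissible range.
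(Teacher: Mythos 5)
Your proposal is correct and follows exactly the paper's own route: the paper proves Proposition \ref{Aprop2} by specializing Theorem \ref{Athm1} and Theorem \ref{Athm2} to $\alpha=(1+|z|^2)^{-2}$, $\phi=(1+|z|^2)^{-1}$, $\psi=(1+|z|^2)^{-l}$, with the integrability conditions checked as in Corollary \ref{Acor2} and the curvature conditions checked via $(\ref{Sch1})$ and $(\ref{Sch2})$. Your write-up simply makes the exponent bookkeeping (in particular the choice of $q>1$ with $q(k-l+2)<1$) more explicit than the paper does.
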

\begin{rem}
By Remark \ref{00000}, $u_{k_1}\neq u_{k_2}$ for any different $k_1,k_2\in[l-2,l-1)\cap(0,\frac{\Lambda}{2}]$.
\end{rem}
The geometric interpretation is as follows.
\begin{corollary}
If $K\in C^\infty(\mathbb{R}^2)$ is nonpositive and nonzero with decay $|K|\leq\Lambda(1+|z|^2)^{-l}$ for two constants $l>1$ and $\Lambda$. Then for any constant
\begin{equation}\begin{split}
k\in[l-2,l-1)\cap(0,\frac{\Lambda}{2}],
\end{split}\end{equation}
there exists a conformal metric $\tilde{g}_k=e^{v_k}(1+|z|^2)^{k}g_0$ on $\mathbb{R}^2$ such that $S^{Ch}_{\tilde{g}_k}=K$, where $v_k\in C^\infty(\mathbb{R}^2)\cap L^\infty(\mathbb{R}^2)$ and $|dv_k|\in L^2(\mathbb{R}^2)$.
\end{corollary}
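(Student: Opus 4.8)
The plan is to read this statement off directly from Proposition \ref{Aprop2} by reversing the reduction from the geometric curvature problem to the scalar equation (\ref{eqR2}). First I would invoke Proposition \ref{Aprop2}: under the stated hypotheses on $K$, and for each admissible $k\in[l-2,l-1)\cap(0,\frac{\Lambda}{2}]$, it produces a solution $u_k$ of (\ref{eqR2}), i.e. $\Delta u_k+Ke^{2u_k}=0$, of the form $2u_k=v_k+k\log(1+|z|^2)$ with $v_k\in C^\infty(\mathbb{R}^2)\cap L^\infty(\mathbb{R}^2)$ and $|dv_k|\in L^2(\mathbb{R}^2)$.

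Next I would define the candidate metric $\tilde g_k:=e^{2u_k}g_0$ and observe that $e^{2u_k}=e^{v_k}(1+|z|^2)^k$, so $\tilde g_k=e^{v_k}(1+|z|^2)^kg_0$ is precisely the conformal metric asserted in the statement, with the regularity and decay properties of $v_k$ inherited verbatim from Proposition \ref{Aprop2}.

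It then remains only to verify $S^{Ch}_{\tilde g_k}=K$. Here I would apply the conformal transformation law (\ref{conformal}) in complex dimension $n=1$ to the flat background $g_0$, for which $S^{Ch}_{g_0}=0$; writing $\tilde g_k=e^{(2u_k)/n}g_0$ means the conformal exponent entering (\ref{conformal}) is $2u_k$. Using the two-dimensional identity $\sqrt{-1}\Lambda_{\omega_{g_0}}\pa\ol\pa=\tfrac12\Delta$ (exactly as in the computation of $S^{Ch}_{g_k}$ carried out above), this gives
$$S^{Ch}_{\tilde g_k}=e^{-2u_k}\bigl(-\sqrt{-1}\Lambda_{\omega_{g_0}}\pa\ol\pa(2u_k)\bigr)=-e^{-2u_k}\Delta u_k=e^{-2u_k}Ke^{2u_k}=K,$$
where the third equality uses that $u_k$ solves (\ref{eqR2}). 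This completes the argument.

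Since the whole analytic substance — the uniform zeroth order estimate on the noncompact space $\mathbb{R}^2$, the verification of the Main Assumption in Corollary \ref{Acor2}, and the existence statement in Theorem \ref{Athm1} — is already packaged inside Proposition \ref{Aprop2}, I do not expect any genuine obstacle in the corollary itself; it is purely the geometric reinterpretation. The only point that demands care is the bookkeeping of normalizations: one must feed the conformal exponent $2u_k$ (not $u_k$) into (\ref{conformal}) to match the factor $e^{2u_k}$ in the metric, and keep track of the factor $\tfrac12$ in $\sqrt{-1}\Lambda_{\omega_{g_0}}\pa\ol\pa=\tfrac12\Delta$, these two constants conspiring to make $S^{Ch}_{\tilde g_k}$ equal $K$ with the correct sign and magnitude.
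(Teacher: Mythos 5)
Your proposal is correct and matches the paper's own route: the paper treats this corollary as an immediate geometric reinterpretation of Proposition \ref{Aprop2}, using exactly the equivalence $e^{v}g_k=e^{2u_k}g_0$ together with the conformal transformation law (\ref{conformal}) and the identity $\sqrt{-1}\Lambda_{\omega_{g_0}}\pa\ol\pa=\tfrac12\Delta$ that were already set up between Proposition \ref{Aprop1} and Theorem \ref{Athm1}. Your care with the normalization (feeding $2u_k$ into (\ref{conformal}) and tracking the factor $\tfrac12$) is precisely the only point of substance, and you handle it correctly.
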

\section{Vortex equation on holomorphic line bundles}
Assume $L$ is a holomorphic line bundle over $X$ and $\vp\in A^0(L)$ is a nonzero section. Consider two Hermitian structures $H$ and $K$, related by  $H=e^fK$, then
\begin{equation}\begin{split}
\sqrt{-1}\Lambda_{\omega_g}F_H+\frac{1}{2}\vp\otimes\vp^{\ast H}
&=\sqrt{-1}\Lambda_{\omega_g} F_K-\sqrt{-1}\Lambda_{\omega_g}\pa\ol\pa f+\frac{1}{2}|\vp|^2e^f,
\end{split}\end{equation}
where $F_\bullet$ denotes the Chren curvature of $\bullet$. So the solvability of the vortex equation
\begin{equation}\begin{split}\label{Vortex1}
\sqrt{-1}\Lambda_{\omega_g}F_H+\frac{1}{2}\vp\otimes\vp^{\ast H}=\frac{\lambda}{2},\ \lambda\in C^\infty(X),
\end{split}\end{equation}
is equivalent to the solvability of
\begin{equation}\begin{split}\label{Vortex2}
\sqrt{-1}\Lambda_{\omega_g}\pa\ol\pa f-\frac{1}{2}|\vp|^2e^f
=\sqrt{-1}\Lambda_{\omega_g}F_K-\frac{\lambda}{2}.
\end{split}\end{equation}
\par Hence Theorem \ref{vortexthm} below is a consequence of the proof of Theorem \ref{KWthm}.
\begin{theorem}\label{vortexthm}
Given a Hermitian structure $K$ on $L$ such that
\begin{equation}\begin{split}
\int_X(2\sqrt{-1}\Lambda_{\omega_g} F_K-\lambda)\dvol_g<0,\ 
|\sqrt{-1}\Lambda_{\omega_g}F_K-\frac{\lambda}{2}|+|\vp|^2\leq\Lambda\phi_X, \end{split}\end{equation}
for a constant $\Lambda$, then $(\ref{Vortex1})$ admits a solution.
\end{theorem}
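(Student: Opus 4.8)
The plan is to read off Theorem \ref{vortexthm} directly from Theorem \ref{KWthm}, using the reduction recorded in $(\ref{Vortex1})$--$(\ref{Vortex2})$. By that equivalence it suffices to produce a bounded smooth $f$ with $|df|\in L^2(X,g)$ solving
\[
\sqrt{-1}\Lambda_{\omega_g}\pa\ol\pa f-\tfrac12|\vp|^2e^{f}=\sqrt{-1}\Lambda_{\omega_g}F_K-\tfrac{\lambda}{2}.
\]
Setting $h:=-\tfrac12|\vp|^2$, this is precisely the Kazdan--Warner type equation $\sqrt{-1}\Lambda_{\omega_g}\pa\ol\pa f+he^{f}=\sqrt{-1}\Lambda_{\omega_g}F_K-\tfrac{\lambda}{2}$ of Theorem \ref{KWthm}, with $f$ in the role of the unknown and $\sqrt{-1}\Lambda_{\omega_g}F_K-\tfrac{\lambda}{2}$ in the role of the source.

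The next step is to verify that the two functions $h$ and $\sqrt{-1}\Lambda_{\omega_g}F_K-\tfrac{\lambda}{2}$ meet the hypotheses of Theorem \ref{KWthm} with the same constant $\Lambda$. Since $\vp$ is a nonzero smooth section, $|\vp|^2$ is a nonnegative smooth function that does not vanish identically, so $h$ is smooth, nonzero and $h\leq0$. The single growth bound in the hypothesis splits into two: as both summands in $|\sqrt{-1}\Lambda_{\omega_g}F_K-\tfrac{\lambda}{2}|+|\vp|^2\leq\Lambda\phi_X$ are nonnegative, we obtain simultaneously $|\sqrt{-1}\Lambda_{\omega_g}F_K-\tfrac{\lambda}{2}|\leq\Lambda\phi_X$ and $|h|=\tfrac12|\vp|^2\leq\Lambda\phi_X$, hence $-\Lambda\phi_X\leq h\leq0$. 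Finally the integral condition is immediate from
\[
\int_X\Bigl(\sqrt{-1}\Lambda_{\omega_g}F_K-\tfrac{\lambda}{2}\Bigr)\dvol_g=\tfrac12\int_X\bigl(2\sqrt{-1}\Lambda_{\omega_g}F_K-\lambda\bigr)\dvol_g<0,
\]
which in particular forces the source to be nonzero.

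With these checks in place, Theorem \ref{KWthm} (more precisely its proof, whose zeroth-order argument uses only $h\leq0$, $h\not\equiv0$ and the stated $L^\infty$ and $L^1$ bounds) supplies a bounded smooth $f$ with $|df|\in L^2(X,g)$ solving the reduced equation; since $f$ is bounded, $e^{f}$ is bounded above and below by positive constants, so $H:=e^{f}K$ is a genuine Hermitian structure and, by the equivalence, solves $(\ref{Vortex1})$. I expect no genuine obstacle here: all of the analytic difficulty, namely the uniform zeroth-order estimate on the noncompact $X$ obtained through the Main Assumption, is already contained in Theorem \ref{KWthm}, and the only point deserving a moment's attention is the clean separation of the single hypothesis $|\sqrt{-1}\Lambda_{\omega_g}F_K-\tfrac{\lambda}{2}|+|\vp|^2\leq\Lambda\phi_X$ into the two bounds required there, together with the observation that $\vp\neq0$ guarantees $h\not\equiv0$.
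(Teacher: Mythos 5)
Your argument is correct and is essentially the paper's own: the authors derive the equivalence of $(\ref{Vortex1})$ with $(\ref{Vortex2})$ and then state that the theorem follows from (the proof of) Theorem \ref{KWthm}. Your explicit verification that $h=-\tfrac12|\vp|^2$ is smooth, nonpositive and nonzero, that the combined bound splits into the two required bounds $|\sqrt{-1}\Lambda_{\omega_g}F_K-\tfrac{\lambda}{2}|\leq\Lambda\phi_X$ and $|h|\leq\Lambda\phi_X$, and that the integral condition gives the negativity and nonvanishing of the source, is exactly the routine checking the paper leaves implicit.
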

\begin{corollary}\label{vortex00}
Let $X=\ol{X}\setminus\Sigma$ be the complement of a complex analytic subset $\Sigma$ in a compact complex manifold $\ol{X}$ with complex codimension at least two and $g=\ol{g}|_{X}$ for a Gauduchon metric $\ol{g}$ on $\ol{X}$. Given a Hermitian structure $K$ on $L$ such that
\begin{equation}\begin{split}
\int_{X}(2\sqrt{-1}\Lambda_{\omega_g} F_K-\lambda)\dvol_g<0,\
|\sqrt{-1}\Lambda_{\omega_g}F_K-\frac{\lambda}{2}|+|\vp|^2\in L^\infty(X),
\end{split}\end{equation}
then $(\ref{Vortex1})$ admits a solution.
\end{corollary}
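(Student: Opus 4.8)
The plan is to derive this as an essentially immediate specialization of Theorem \ref{vortexthm}, using Proposition \ref{assumption} to supply the Main Assumption; no new analysis is needed, since all the hard estimates have already been carried out. First I would invoke Proposition \ref{assumption}: because $X=\ol{X}\setminus\Sigma$ with $\Sigma$ a complex analytic subset of complex codimension at least two and $g=\ol{g}|_{X}$ for a Gauduchon metric $\ol{g}$ on $\ol{X}$, the pair $(X,g)$ satisfies the Main Assumption. The proof of that proposition in fact exhibits the weight as the constant function $\phi_X\equiv1$. This choice is legitimate precisely because the codimension hypothesis forces $\Sigma$ to have real codimension at least four, hence to be a null set for $\dvol_{\ol{g}}$; consequently $\phi_X\equiv1$ is integrable, with $\int_X\phi_X\,\dvol_g=\vol(\ol{X},\ol{g})<\infty$.

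Next I would translate the given $L^\infty$ hypothesis into the weighted bound demanded by Theorem \ref{vortexthm}. Setting
\[
\Lambda:=\bigl\|\,|\sqrt{-1}\Lambda_{\omega_g}F_K-\tfrac{\lambda}{2}|+|\vp|^2\,\bigr\|_{L^\infty(X)},
\]
which is finite by assumption, the pointwise inequality $|\sqrt{-1}\Lambda_{\omega_g}F_K-\tfrac{\lambda}{2}|+|\vp|^2\leq\Lambda=\Lambda\phi_X$ holds everywhere on $X$. Together with the sign condition $\int_{X}(2\sqrt{-1}\Lambda_{\omega_g}F_K-\lambda)\,\dvol_g<0$, which is assumed verbatim, this shows that for the data $X$, $K$, $\lambda$, $\vp$ and this constant $\Lambda$ (with $\phi_X\equiv1$) both hypotheses of Theorem \ref{vortexthm} are satisfied.

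The conclusion then follows directly: Theorem \ref{vortexthm} produces a solution of the vortex equation (\ref{Vortex1}). Unwinding the reduction (\ref{Vortex1}) $\Leftrightarrow$ (\ref{Vortex2}), the solution has the form $H=e^{f}K$, where $f$ is the bounded solution of (\ref{Vortex2}) obtained by applying (the proof of) Theorem \ref{KWthm} to the nonpositive coefficient $h=-\tfrac12|\vp|^2\leq0$ and right-hand side $\sqrt{-1}\Lambda_{\omega_g}F_K-\tfrac{\lambda}{2}$. The only point genuinely requiring verification is that $\phi_X\equiv1$ qualifies as a weight in the Main Assumption on this noncompact $X$—that is, its integrability, which rests on $\Sigma$ being negligible—so I expect no real obstacle here; everything else is a transcription of the statement of Theorem \ref{vortexthm} with the constant weight substituted in.
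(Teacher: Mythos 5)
Your proposal is correct and follows exactly the route the paper intends: Proposition \ref{assumption} supplies the Main Assumption with $\phi_X\equiv1$ (integrable since $\ol{X}$ is compact and $\Sigma$ is negligible), the $L^\infty$ hypothesis then reads as the weighted bound $\leq\Lambda\phi_X$ with $\Lambda$ the sup norm, and Theorem \ref{vortexthm} applies verbatim. No gaps; this is the same argument as the paper's.
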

\begin{proposition}\label{vortex1}
For two constants $l>0$ and $k\in[l-2,l-1)$, we set
\begin{equation}\begin{split}
\omega_k=\frac{\sqrt{-1}}{2}(1+|z|^2)^{k}dz\wedge d\ol{z}
\end{split}\end{equation}
and assume $f,h$ are smooth nonzero functions on $\mathbb{C}$ satisfying
\begin{equation}\begin{split}
\int_{\mathbb{C}}f\omega_k<0,\ |f|\leq\Lambda(1+|z|^2)^{-l},\ -\Lambda(1+|z|^2)^{-l}\leq h\leq0,
\end{split}\end{equation}
for a constant $\Lambda$, then the equation
\begin{equation}\begin{split}
\sqrt{-1}\Lambda_{\omega_k}\pa\ol\pa u+he^u=f
\end{split}\end{equation}
admits a bounded smooth function on $\mathbb{C}$.
\end{proposition}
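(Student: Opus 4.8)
The plan is to view this statement as an immediate application of Theorem \ref{KWthm}, once the geometric setup of Corollary \ref{Acor2} has been invoked. First I would observe that $\omega_k$ is nothing but the fundamental form of the Hermitian metric $g_k=(1+|z|^2)^kg_0$ on $\mathbb{R}^2\cong\mathbb{C}$: since the standard metric has fundamental form $\omega_{g_0}=\frac{\sqrt{-1}}{2}dz\wedge d\ol z$, the conformal scaling yields $\omega_{g_k}=(1+|z|^2)^k\omega_{g_0}=\omega_k$. Thus the equation $\sqrt{-1}\Lambda_{\omega_k}\pa\ol\pa u+he^u=f$ is exactly the Kazdan-Warner type equation of Theorem \ref{KWthm} on $(\mathbb{R}^2,g_k)$.

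Next, because $l>0$ and $k\in[l-2,l-1)$, Corollary \ref{Acor2} guarantees that $(\mathbb{R}^2,g_k)$ satisfies the Main Assumption with $\phi_{\mathbb{R}^2}=\psi_l=(1+|z|^2)^{-l}$. I would then match the hypotheses of Theorem \ref{KWthm} against those of the present statement, taking $\phi_X=\psi_l$. In complex dimension one the volume element coincides with the fundamental form, $\dvol_{g_k}=\omega_k$, so the sign condition transfers verbatim as $\int_{\mathbb{C}}f\dvol_{g_k}=\int_{\mathbb{C}}f\omega_k<0$; moreover the decay conditions $|f|\leq\Lambda(1+|z|^2)^{-l}$ and $-\Lambda(1+|z|^2)^{-l}\leq h\leq0$ become precisely $|f|\leq\Lambda\phi_X$ and $-\Lambda\phi_X\leq h\leq0$.

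With every hypothesis of Theorem \ref{KWthm} verified, that theorem produces a bounded smooth $u$ on all of $\mathbb{C}$, with $|du|\in L^2(\mathbb{C},g_k)$, solving the desired equation. There is no genuine obstacle here: the substance of the argument lies entirely in Theorem \ref{KWthm} and Corollary \ref{Acor2}, and what remains is the routine bookkeeping of confirming that the single weight $\phi_{\mathbb{R}^2}=\psi_l$ simultaneously controls $f$ and $h$ and enters the sign condition through the volume form. The only point demanding a moment's care is the identification $\dvol_{g_k}=\omega_k$ in real dimension two, which ensures the integral sign hypothesis passes over unchanged; everything else is a direct substitution of the stated decay rates into the Main Assumption framework.
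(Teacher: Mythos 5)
Your proposal is correct and is exactly the paper's argument: the paper's proof of this proposition is the one-line citation of Theorem \ref{KWthm} together with Corollary \ref{Acor2}, and you have simply filled in the routine identifications ($\omega_k=\omega_{g_k}$ for $g_k=(1+|z|^2)^kg_0$, $\dvol_{g_k}=\omega_k$ in complex dimension one, and $\phi_X=\psi_l$) that make the application legitimate.
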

\begin{proof}
It follows from Theorem \ref{KWthm} and Corollary \ref{Acor2}.
\end{proof}
\begin{proposition}\label{vortex2}
Let $M$ be a compact complex manifold with Gauduchon metric $g_M$. Assume $f,h$ are smooth nonzero functions on $M\times\mathbb{C}$ satisfying
\begin{equation}\begin{split}
\int_{M\times\mathbb{C}}f\dvol_{g_M\times g_0}<0,\
|f|\leq\Lambda(1+|z|^2)^{-l},\
-\Lambda(1+|z|^2)^{-l}\leq h\leq0,
\end{split}\end{equation}
at $(m,z)\in M\times\mathbb{C}$ for two constants $l\in(1,2]$ and $\Lambda$, then
\begin{equation}\begin{split}
\sqrt{-1}\Lambda_{\omega_{g_M\times g_0}}\pa\ol\pa u+he^u=f
\end{split}\end{equation}
admits a bounded smooth function on $M\times\mathbb{C}$.
\end{proposition}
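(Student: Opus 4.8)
The plan is to deduce the statement from Theorem \ref{KWthm} by checking that the product $(M\times\mathbb{C},g_M\times g_0)$ satisfies the Main Assumption with $\phi_{M\times\mathbb{C}}=(1+|z|^2)^{-l}$. This function is nonnegative and, since $l>1$, integrable:
$\int_{M\times\mathbb{C}}(1+|z|^2)^{-l}\dvol_{g_M\times g_0}=\vol(M,g_M)\int_{\mathbb{C}}(1+|z|^2)^{-l}\dvol_{g_0}<\infty$. The hypotheses imposed on $f$ and $h$ are exactly those of Theorem \ref{KWthm} for this $\phi$, so once the Main Assumption is established the conclusion is immediate.

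To verify the Main Assumption I would exploit the product structure. Writing $\omega_{g_M\times g_0}=\pi_M^\ast\omega_{g_M}+\pi_{\mathbb{C}}^\ast\omega_{g_0}$, the mixed components vanish, so for every $F\in C^\infty(M\times\mathbb{C})$ the Chern Laplacian splits as $\sqrt{-1}\Lambda_{\omega_{g_M\times g_0}}\pa\ol\pa F=\sqrt{-1}\Lambda_{\omega_{g_M}}\pa_M\ol\pa_M F+\frac{1}{2}\Delta_z F$, where $\Delta_z$ is the flat Laplacian in $z$. Now take a nonnegative bounded $f$ with $\sqrt{-1}\Lambda_{\omega_{g_M\times g_0}}\pa\ol\pa f\geq -A(1+|z|^2)^{-l}$ and integrate this pointwise inequality over the fibre $M$. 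The term $\int_M\sqrt{-1}\Lambda_{\omega_{g_M}}\pa_M\ol\pa_M f\,\dvol_{g_M}$ vanishes because $g_M$ is Gauduchon, by the very integration by parts used in the $L^2$-estimate of Theorem \ref{KWthm}, whereas $\int_M\frac{1}{2}\Delta_z f\,\dvol_{g_M}=\frac{\vol(M)}{2}\Delta_z\bar f$ for the fibre average $\bar f(z)=\vol(M)^{-1}\int_M f\,\dvol_{g_M}$. Since $(1+|z|^2)^{-l}$ does not depend on the $M$-variable, dividing by $\vol(M)$ gives $\sqrt{-1}\Lambda_{\omega_{g_0}}\pa\ol\pa\bar f\geq -A(1+|z|^2)^{-l}$ on $\mathbb{C}$, with $\bar f$ smooth, nonnegative and bounded. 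Corollary \ref{Acor1}, applicable precisely because $l\in(1,2]$, then yields $\sup_{\mathbb{C}}\bar f\leq C_A\big(1+\int_{\mathbb{C}}\bar f(1+|z|^2)^{-l}\dvol_{g_0}\big)=C_A\big(1+\vol(M)^{-1}\int_{M\times\mathbb{C}}f(1+|z|^2)^{-l}\dvol_{g_M\times g_0}\big)$.

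The remaining, and main, difficulty is to pass from control of the fibre average $\bar f$ back to control of $\sup_{M\times\mathbb{C}}f$. For this I would invoke a local boundedness estimate for subsolutions of the uniformly elliptic operator $\sqrt{-1}\Lambda_{\omega_{g_M\times g_0}}\pa\ol\pa$. The key point is that $g_0$ is flat, so the translations $z\mapsto z+z_0$ are isometries of $g_M\times g_0$ and every slab $M\times B_1(z_0)$ carries identical geometry; cutting off only in the $z$-direction, so that the topology of $M$ never interferes and the Sobolev inequality on $M\times B_1(z_0)$ is available, the De Giorgi--Nash--Moser local maximum principle gives, with one uniform constant $C$, $\sup_{M\times B_{1/2}(z_0)}f\leq C\big(\fint_{M\times B_1(z_0)}f\,\dvol+\|(1+|z|^2)^{-l}\|_{L^\infty}\big)$. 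Here $\fint_{M\times B_1(z_0)}f\,\dvol=\fint_{B_1(z_0)}\bar f\,\dvol_{g_0}\leq\sup_{\mathbb{C}}\bar f$ and $\|(1+|z|^2)^{-l}\|_{L^\infty}\leq1$, so taking the supremum over $z_0$ produces $\sup_{M\times\mathbb{C}}f\leq C(1+\sup_{\mathbb{C}}\bar f)$. Combined with the previous step this is exactly the Main Assumption, and Theorem \ref{KWthm} concludes. I expect the delicate part to be organizing the Moser iteration on the slabs and invoking the flatness of $g_0$ to keep the constant uniform in $z_0$; by contrast the fibre-averaging and the appeal to Corollary \ref{Acor1} are routine once the splitting of the Laplacian and the Gauduchon vanishing are recorded.
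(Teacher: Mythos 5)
Your argument is correct and coincides with the paper's own proof: both verify the Main Assumption for $(M\times\mathbb{C},g_M\times g_0)$ with $\phi=(1+|z|^2)^{-l}$ by fibre-averaging the differential inequality over $M$ (the $M$-part dropping out by the Gauduchon condition), applying Corollary \ref{Acor1} to the averaged function, and then recovering the pointwise supremum via a Moser iteration on the translates $M\times B_1(z_0)$ with a constant uniform in $z_0$. The only differences are cosmetic (normalization of the fibre average and the choice of radii in the local estimate).
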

\begin{proof}
By Theorem \ref{KWthm}, it suffices to prove $(M\times\mathbb{C},g_M\times  g_0)$ satisfis the Main Assumption which can be directly checked by Moser iteration, see \cite{ZZ2021}. Below we sketch the proof just for readers' conveniences. If $f\in C^\infty(M\times\mathbb{C})$ is nonnegative, bounded and 
\begin{equation}\begin{split}\label{prop421}
\sqrt{-1}\Lambda_{\omega_{g_M\times g_0}}\pa\ol\pa f\geq-A(1+|z|^2)^{-l},
\end{split}\end{equation}
for a positive constant $A$, we set $\tilde{f}(z)=\int_Mf(m,z)\dvol_{g_M}$ and it follows
\begin{equation}\begin{split}\label{prop422}
\sqrt{-1}\Lambda_{\omega_{g_0}}\pa\ol\pa\tilde{f}
&\geq-A\vol(M,g_M)(1+|z|^2)^{-l},
\end{split}\end{equation}
from which and Corollary \ref{Acor1} it holds
\begin{equation}\begin{split}\label{prop423}
\sup\limits_{\mathbb{C}}\int_Mf(m,z)\dvol_{g_M}\leq C_1\left(1+\int_{M\times\mathbb{C}}f(1+|z|^2)^{-l}\dvol_{g_M\times g_0}\right),
\end{split}\end{equation}
for a positive constant $C_1$. In addition, a Moser iteration procedure on $(\ref{prop421})$ yields
\begin{equation}\begin{split}\label{prop424}
\sup\limits_{M\times B_1(z_0)}f\leq C_2(1+\int_{M\times B_2(z_0)}f\dvol_{g_M}),
\end{split}\end{equation}
for any $z_0\in\mathbb{C}$ and a positive constant $C_2$, where $B_1(z_0)=\{z\in\mathbb{C}, |z-z_0|<1\}$, $B_2(z_0)=\{z\in\mathbb{C}, |z-z_0|<2\}$. By $(\ref{prop423})$ and $(\ref{prop424})$, we have
\begin{equation}\begin{split}\label{prop425}
\sup\limits_{M\times B_1(z_0)}f\leq C_3(1+\int_{M\times\mathbb{C}}f(1+|z|^2)^{-l}\dvol_{g_M\times g_0}),
\end{split}\end{equation}
for a positive constant $C_3$ and the proof is complete since $z_0$ is arbitrary.
\end{proof}
Applying Proposition \ref{vortex1} and Proposition \ref{vortex2} on $(\ref{Vortex2})$, we have
\begin{corollary}\label{vortex11}
Let $	X=\mathbb{C}$ and given a Hermitian structure $K$ on $L$, two constants $k$, $l$ such that $l>0$, $k\in[l-2,l-1)$ and
\begin{equation}\begin{split}
\int_{\mathbb{C}}(2\sqrt{-1}F_K-\lambda\omega_{k})<0,\
|\sqrt{-1}\Lambda_{\omega_{k}}F_K-\frac{\lambda}{2}|+|\vp|^2\leq\Lambda(1+|z|^2)^{-l},
\end{split}\end{equation}
where $\omega_k=\frac{\sqrt{-1}}{2}(1+|z|^2)^{k}dz\wedge d\ol{z}$, then the curvature equation
\begin{equation}\begin{split}
\sqrt{-1}F_H+\frac{1}{2}\vp\otimes\vp^{\ast H}\omega_{k}=\frac{\lambda}{2}\omega_{k}
\end{split}\end{equation}
admits a solution.
\end{corollary}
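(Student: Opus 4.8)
The plan is to reduce the curvature equation of the corollary to the scalar Kazdan--Warner equation already solved in Proposition \ref{vortex1}. Writing $H=e^fK$ as in the discussion preceding $(\ref{Vortex2})$, the solvability of the vortex equation $(\ref{Vortex1})$ on $\mathbb{C}$ with $\omega_g=\omega_k$ is equivalent to finding a bounded smooth $f$ solving
\begin{equation*}
\sqrt{-1}\Lambda_{\omega_k}\pa\ol\pa f-\frac12|\vp|^2e^f=\sqrt{-1}\Lambda_{\omega_k}F_K-\frac\lambda2.
\end{equation*}
Matching this against the equation $\sqrt{-1}\Lambda_{\omega_k}\pa\ol\pa u+he^u=\tilde f$ of Proposition \ref{vortex1}, I read off $u=f$, $h=-\frac12|\vp|^2$ and right-hand side $\tilde f=\sqrt{-1}\Lambda_{\omega_k}F_K-\frac\lambda2$, so it remains only to verify the three hypotheses of that proposition for this choice of $h$ and $\tilde f$.

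The sign and decay conditions are immediate from the assumption $|\sqrt{-1}\Lambda_{\omega_k}F_K-\frac\lambda2|+|\vp|^2\leq\Lambda(1+|z|^2)^{-l}$: dropping the nonnegative term $|\vp|^2$ gives $|\tilde f|\leq\Lambda(1+|z|^2)^{-l}$, while dropping $|\sqrt{-1}\Lambda_{\omega_k}F_K-\frac\lambda2|$ gives $|\vp|^2\leq\Lambda(1+|z|^2)^{-l}$, hence $-\Lambda(1+|z|^2)^{-l}\leq h=-\frac12|\vp|^2\leq0$. Since $\vp$ is a nonzero section, $h\not\equiv0$. For the integral condition I would use that $\dim_{\mathbb{C}}\mathbb{C}=1$, so that for the $(1,1)$-form $F_K$ one has the trace identity $(\sqrt{-1}\Lambda_{\omega_k}F_K)\,\omega_k=\sqrt{-1}F_K$; integrating yields
\begin{equation*}
\int_{\mathbb{C}}\tilde f\,\omega_k=\int_{\mathbb{C}}\Big(\sqrt{-1}F_K-\frac\lambda2\omega_k\Big)=\frac12\int_{\mathbb{C}}(2\sqrt{-1}F_K-\lambda\omega_k)<0,
\end{equation*}
which is precisely the stated hypothesis; in particular $\tilde f\not\equiv0$.

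With all hypotheses checked, Proposition \ref{vortex1} (whose applicability rests on Corollary \ref{Acor2}) produces a bounded smooth $f$ on $\mathbb{C}$ solving $(\ref{Vortex2})$, and therefore $H=e^fK$ solves the vortex equation $(\ref{Vortex1})$, namely $\sqrt{-1}\Lambda_{\omega_k}F_H+\frac12\vp\otimes\vp^{\ast H}=\frac\lambda2$. Multiplying this scalar identity by $\omega_k$ and using once more the one-dimensional relation $(\sqrt{-1}\Lambda_{\omega_k}F_H)\,\omega_k=\sqrt{-1}F_H$ converts it into the $2$-form equation
\begin{equation*}
\sqrt{-1}F_H+\frac12\vp\otimes\vp^{\ast H}\omega_k=\frac\lambda2\omega_k
\end{equation*}
asserted in the corollary.

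I expect no genuine obstacle here: the entire analytic difficulty (the uniform zeroth order estimate and the blow-up argument) is already absorbed into Proposition \ref{vortex1} and, ultimately, Theorem \ref{KWthm}. The only care needed is the bookkeeping of the one-dimensional trace identity $(\sqrt{-1}\Lambda_{\omega_k}\,\cdot\,)\,\omega_k=\sqrt{-1}(\,\cdot\,)$, which is what lets the integral hypothesis written in $2$-form language be read as the condition $\int_{\mathbb{C}}\tilde f\,\omega_k<0$, together with the elementary sign and decay comparisons recorded above.
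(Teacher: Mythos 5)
Your proposal is correct and follows exactly the paper's route: the paper obtains Corollary \ref{vortex11} by applying Proposition \ref{vortex1} to the reduced equation $(\ref{Vortex2})$ with $h=-\frac12|\vp|^2$ and right-hand side $\sqrt{-1}\Lambda_{\omega_k}F_K-\frac{\lambda}{2}$, which is precisely your reduction. Your explicit verification of the sign, decay, and integral hypotheses (via the one-dimensional identity $(\sqrt{-1}\Lambda_{\omega_k}F_K)\,\omega_k=\sqrt{-1}F_K$) just spells out details the paper leaves implicit.
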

\begin{corollary}\label{vortex22}
Let $X=M\times\mathbb{C}$, where $M$ is a compact complex manifold with Gauduchon metric $g_M$. Given a Hermitian structure $K$ on $L$ such that
\begin{equation}\begin{split}
\int_{M\times\mathbb{C}}(2\sqrt{-1}\Lambda_{\omega_{g_M\times g_0}}F_K-\lambda)\dvol_{g_M\times g_0}<0,
\end{split}\end{equation}
\begin{equation}\begin{split}
|\sqrt{-1}\Lambda_{\omega_{g_M\times g_0}}F_K-\frac{\lambda}{2}|+|\vp|^2\leq\Lambda(1+|z|^2)^{-l},
\end{split}\end{equation}
at $(m,z)\in M\times\mathbb{C}$ for two constants $l\in(1,2]$ and $\Lambda$, then 
\begin{equation}\begin{split}
\sqrt{-1}\Lambda_{\omega_{g_M\times g_0}}F_H+\frac{1}{2}\vp\otimes\vp^{\ast H}=\frac{\lambda}{2}
\end{split}\end{equation}
admits a solution.
\end{corollary}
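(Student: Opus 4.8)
The plan is to deduce the corollary directly from Proposition \ref{vortex2} by means of the equivalence between (\ref{Vortex1}) and (\ref{Vortex2}) recorded at the start of this section. Writing $H = e^f K$ for the unknown Hermitian structure, solving (\ref{Vortex1}) on $M \times \mathbb{C}$ is the same as producing a smooth function $f$ satisfying
\[
\sqrt{-1}\Lambda_{\omega_{g_M\times g_0}}\pa\ol\pa f - \tfrac{1}{2}|\vp|^2 e^f = \sqrt{-1}\Lambda_{\omega_{g_M\times g_0}}F_K - \tfrac{\lambda}{2}.
\]
I would read this as the Kazdan-Warner type equation of Proposition \ref{vortex2}, with the nonpositive function $h := -\tfrac{1}{2}|\vp|^2$ in the role of ``$h$'' and the function $F := \sqrt{-1}\Lambda_{\omega_{g_M\times g_0}}F_K - \tfrac{\lambda}{2}$ in the role of the right-hand side ``$f$'' there.

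The next step is to check that the hypotheses of Proposition \ref{vortex2} are met for the exponent $l \in (1,2]$. Since $\vp$ is a nonzero section, $h = -\tfrac{1}{2}|\vp|^2$ is nonpositive and not identically zero, and the second hypothesis of the corollary gives $|h| = \tfrac{1}{2}|\vp|^2 \leq \Lambda(1+|z|^2)^{-l}$, so that $-\Lambda(1+|z|^2)^{-l} \leq h \leq 0$. The same hypothesis yields $|F| \leq \Lambda(1+|z|^2)^{-l}$, while the first hypothesis is precisely $\int_{M\times\mathbb{C}} F \dvol_{g_M\times g_0} < 0$, which in particular forces $F$ to be nonzero. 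Thus every sign, integral and decay condition demanded by Proposition \ref{vortex2} holds.

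With these verifications in place, Proposition \ref{vortex2} furnishes a bounded smooth $f$ solving (\ref{Vortex2}), and $H = e^f K$ is then the sought Hermitian structure solving (\ref{Vortex1}). I do not expect a genuine obstacle at this stage, since all the analysis is already absorbed into Proposition \ref{vortex2}, whose proof establishes that $(M \times \mathbb{C}, g_M \times g_0)$ satisfies the Main Assumption with $\phi_X = (1+|z|^2)^{-l}$ through fiberwise integration over $M$ (appealing to Corollary \ref{Acor1} on $\mathbb{C}$) combined with a Moser iteration in the $\mathbb{C}$-direction. The only point requiring care is bookkeeping: matching the sign conventions in passing from (\ref{Vortex1}) to (\ref{Vortex2}), and observing that the single combined bound $|\sqrt{-1}\Lambda_{\omega_{g_M\times g_0}}F_K - \tfrac{\lambda}{2}| + |\vp|^2 \leq \Lambda(1+|z|^2)^{-l}$ is exactly what simultaneously controls both $F$ and $h$.
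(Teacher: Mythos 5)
Your proposal is correct and is exactly the argument the paper intends: the corollary is stated as a direct application of Proposition \ref{vortex2} to equation (\ref{Vortex2}), with $h=-\tfrac{1}{2}|\vp|^2$ and right-hand side $\sqrt{-1}\Lambda_{\omega_{g_M\times g_0}}F_K-\tfrac{\lambda}{2}$, and your verification of the sign, decay and integral hypotheses fills in the routine details the paper leaves implicit.
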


\end{document}